\def\Cond{\,\vert\,}
\newcommand\one{\mathbbm{1}}
\newcommand\nm[1]{\left\lVert#1\right\rVert}
\theoremstyle{plain}
\newtheorem*{theorem*}{Theorem}
\newtheorem{theorem}{Theorem}[section]
\newtheorem{lemma}[theorem]{Lemma}
\newtheorem{claim}[theorem]{Claim}
\newtheorem{proposition}[theorem]{Proposition}
\newtheorem*{claim*}{Claim}
\def\N{\mathbb{N}}
\def\R{\mathbb{R}}
\def\P{\mathbb{P}}
\def\E{\mathbb{E}}
\def\Ss{\mathbb{S}}
\def\Bb{\mathbb{B}}
\def\C{\mathcal}
\def\B{\mathbf}
\def\Scr{\mathscr}
\let\eps\varepsilon
\begin{document}

\begin{frontmatter}[classification=text]


\author[ja]{Juhan Aru}
\author[bn]{Bhargav Narayanan}
\author[as]{Alex Scott}
\author[rv]{Ramarathnam Venkatesan}

\begin{abstract}
We study a higher-dimensional `balls-into-bins' problem. An infinite sequence of i.i.d.\ random vectors is revealed to us one vector at a time, and we are required to partition these vectors into a fixed number of bins in such a way as to keep the sums of the vectors in the different bins close together; how close can we keep these sums almost surely? This question, our primary focus in this paper, is closely related to the classical problem of partitioning a sequence of vectors into balanced subsequences, in addition to having applications to some problems in computer science.
\end{abstract}
\end{frontmatter}

\section{Introduction}
In this note, we consider the following partitioning problem. Let $\B{V}(\mu) = (V_n)_{n\ge 1}$ be a sequence of independent random vectors, all distributed according to some common probability distribution $\mu$ on the $d$-dimensional Euclidean unit ball $\Bb^d \subset \R^d$. The elements of this i.i.d.\ sequence $\B{V}(\mu)$ are revealed to us in order, one vector at a time. Each time a new vector is revealed to us, we are required to assign this vector to one of a fixed number of bins $\B{B}_1, \B{B}_2, \dots, \B{B}_k$ before seeing the next vector in the sequence. By adopting a suitable strategy to assign vectors to bins, how `close together' can we keep the sums of the vectors in the different bins? 

A more precise formulation of this question is as follows. For each $1 \le i \le k$, let $B^0_i = 0$, and for a positive integer $n \in \N$, let $B_i^n$ denote the sum of the vectors in the bin $\B{B}_i$ at time $n$; in other words, 
\[B^n_i = \sum_{j = 1}^n V_j \one_{\{V_j \in \B{B}_i\}}.\]
A \emph{partitioning strategy} is a (possibly randomised) map from $(\R^d)^{k+1}$ to the set of bins $\{\B{B}_1, \B{B}_2, \dots, \B{B}_k\}$ which, given the vectors $B^n_1, B^n_2, \dots, B^n_k$ and $V_{n+1}$, tells us which bin  $V_{n+1}$ should be assigned to; in the language of computer science, a partitioning strategy is an \emph{online algorithm} for assigning vectors to bins. We shall mainly be interested in partitioning strategies that minimise, for (large) $T \ge 0$,  the quantity
\[D(T) = \max_{1 \le n \le T} \max_{1 \le i,j \le k}  \,\nm{B_i^n - B_j^n},\]
the largest observed Euclidean distance between a pair of bins up to time $T$. In this paper, we shall mostly be concerned with the asymptotic behaviour of $D(T)$ as $T \to \infty$ while the dimension $d \ge 1$, the number of bins $k \ge 2$ and the distribution $\mu$ remain fixed, so it is perhaps worth emphasising that we choose to work with the Euclidean norm and to track the largest observed distance between any pair of bins purely for concreteness; indeed, all of our results hold as stated, albeit with different implied constants, for any choice of norm, and for any well-defined notion that tracks how close together the various bins are, such as the largest distance between any bin and the average of the bins, for example.

The fact that serves as the starting point for the work here is the classical result (see~\citep{textbook}, for instance) that by assigning vectors to bins uniformly at random, one can always ensure that $D(T) = O(\sqrt{T\log T})$; we shall attempt to quantify by how much one can hope to improve on this. We discuss two other motivations for studying the problem at hand below.

First, the related problem of partitioning a \emph{deterministic} sequence of vectors from the $d$-dimensional unit ball into a fixed number of `balanced subsequences' has a rich history and may be traced back to an old question due to Riemann and L\'evy that was subsequently answered by Steinitz~\citep{Steinitz}; various forms of this problem have since been investigated and we refer the interested reader to the survey of B{\'a}r{\'a}ny~\citep{sum-survey}. We mention one result in this area that is relevant to the problem at hand. Let $\B{V} = (V_n)_{n\ge 1}$ be \emph{any} sequence of vectors lying in the $d$-dimensional unit ball $\Bb^d \subset \R^d$, and consider the problem of assigning each element of this sequence of vectors to one of a fixed number of bins $\B{B}_1, \B{B}_2, \dots, \B{B}_k$ as before, except using a \emph{prescient partitioning strategy}: by a prescient partitioning strategy, we mean a strategy that is allowed to see the entire sequence $\B{V}$ ahead of time. Improving on an earlier result of Doerr and Srivastav~\citep{offline-Kpart1}, B{\'a}r{\'a}ny and Doerr~\citep{offline-Kpart2} proved that there exists a prescient partitioning strategy that ensures that $D(T) \le Cd$ uniformly in $T$ for any $d,k \in \N$ with $k \ge 2$ and any sequence $\B{V}$ as above, where $C \approx 4.001$ is a universal constant (though it should be noted that while these prescient strategies do require `knowledge of the future', they only need to look $\Theta(d)$ vectors ahead). In the light of this fact, it is natural to ask what changes when we are required to partition $\B{V}$ without any knowledge of the future.

Next, the question we study here arises naturally in the context of load-balancing, resource allocation and scheduling problems in large scale computation. In these settings, there is a finite set of servers and a sequence of incoming jobs. Jobs must be allocated to servers as and when they arrive, and each job consumes certain quantities of the different resources (memory or processing power, for example) available on the server it is assigned to. Ideally, one would like to allocate the jobs in a `balanced' fashion where the total load on each server is roughly the same; see the survey of Azar~\citep{lb-survey} for a short introduction to this area. Partitioning strategies that perform well in the `worst-case' can often be suboptimal in practice since the empirical distribution of the incoming jobs is typically random (and not adversarial). Therefore, various probabilistic models for these problems have been studied over the last twenty years and a number of results have been proved in different settings. These results, for the most part, deal with the one-dimensional case of the problem we study and address various questions about distributing (possibly weighted) balls into bins; for a small sample of the existing literature, see~\citep{lb-1, lb-2, lb-3, lb-5}. The higher-dimensional problem we study here is not only inherently interesting, but also exhibits genuinely different behaviour compared to the one-dimensional problem, as we shall shortly see. Finally, let us make two remarks with practical applications in mind: first, in practice, we lose no generality by assuming that the `job-vectors' come from the unit ball as our results remain valid after any suitable (finite) rescaling; second, fine tuning our partitioning strategies for specific distributions can make the strategies fragile, so we focus on results that either hold for all probability distributions on the unit ball, or are robust for a wide class of `nice' distributions.

\section{Our results}\label{s:results}
A few remarks about notation are in order before we state our results. In what follows, we write $[n]$ for the set $\{1, 2, \dots, n\}$. We use $\langle \cdot , \cdot \rangle$ to denote the standard inner product in $\R^d$ and $\nm{\cdot}$ to denote the associated Euclidean norm. Also, we write $\lambda_d$ for the $d$-dimensional Lebesgue measure. We shall make use of standard asymptotic notation; in what follows, the variable tending to infinity will always be $T$ unless we explicitly specify otherwise. Constants suppressed by the asymptotic notation are allowed to depend on the fixed parameters ($d$, $k$ and $\mu$) but not on $T$. Finally, we use the term \emph{with high probability} to mean with probability tending to $1$ as $T \to \infty$.

Our first result, a strategy-agnostic lower bound on $D(T)$, will serve as a useful benchmark.
\begin{proposition}\label{lowbound}
Fix $d,k \ge 2$, and let $\mu$ be the uniform distribution on $\Bb^{d} \subset \R^d$. Regardless of the partitioning strategy used to partition $\B{V}(\mu)$ into $k$ bins, almost surely,
\[\liminf_{T\to\infty} \left(\frac{D(T) (\log\log T)^{1/2}}{(\log T)^{1/2}} \right) = \Omega(1).\]
\end{proposition}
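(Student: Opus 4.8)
The plan is to prove a single-scale estimate and amplify it via Borel--Cantelli. Fix a small constant $c = c(d,k) > 0$ (chosen at the end) and set $\Delta_N := c(\log N/\log\log N)^{1/2}$; I claim it is enough to show $\P(D(N) \le \Delta_N) \le N^{-2}$ for all large $N$. Indeed $T \mapsto D(T)$ is non-decreasing, so $D(T) \ge D(2^m)$ for $T \in [2^m, 2^{m+1})$ while $\Delta_T$ changes by at most a bounded factor over such a block; hence $\sum_m \P(D(2^m) \le \Delta_{2^m}) < \infty$ forces, almost surely, $D(T) \ge \tfrac{c}{2}(\log T/\log\log T)^{1/2}$ for all large $T$, which is the assertion. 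So fix a large $N$ and write $\Delta := \Delta_N$.

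Introduce a potential. With $S_n := \sum_{j \le n} V_j$ and $Y_i^n := B_i^n - \tfrac1k S_n$ we have $\sum_i Y_i^n = 0$ and, since $Y_i^n = \tfrac1k\sum_j (B_i^n - B_j^n)$, also $\nm{Y_i^n} \le D(n)$ for every $i$. Thus on $\{D(N) \le \Delta\}$ the quantity $\Phi_n := \sum_i \nm{Y_i^n}^2$ stays in $[0, k\Delta^2]$ for all $n \le N$; writing $\tau := \inf\{n : \Phi_n > k\Delta^2\}$, it therefore suffices to bound $\P(\tau > N)$. A direct calculation using $\sum_i Y_i^n = 0$ shows that if $V_{n+1}$ is placed in bin $i_0$ by an arbitrary (possibly randomised) strategy, then
\[\Phi_{n+1} - \Phi_n \;=\; 2\langle Y_{i_0}^n, V_{n+1}\rangle + \tfrac{k-1}{k}\nm{V_{n+1}}^2 \;\ge\; \tfrac{k-1}{k}\nm{V_{n+1}}^2 - 2\max_i\bigl|\langle Y_i^n, V_{n+1}\rangle\bigr|,\]
the inequality being uniform over the choice of $i_0$.

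Call step $n+1$ \emph{good} if $\nm{V_{n+1}} \ge \tfrac12$ and $\max_i|\langle Y_i^n, V_{n+1}\rangle| \le c_0$, where $c_0 = c_0(k) > 0$ is small enough that the display above then forces $\Phi_{n+1} - \Phi_n \ge \eta$ for a constant $\eta = \eta(k) > 0$. Good steps are not too rare: since $\sum_i Y_i^n = 0$, the vectors $Y_i^n$ span a subspace $W_n \subseteq \R^d$ with $\dim W_n \le p^* := \min(k-1,d)$, and $\max_i|\langle Y_i^n, V\rangle| \le \sqrt{\Phi_n}\,\nm{P_{W_n}V}$; and because $d \ge 2$, a uniform $V$ on $\Bb^d$ has a bounded density, so for a fixed subspace $W$ with $\dim W \le d-1$ and small $t$ one has $\P(\nm{P_W V} \le t,\,\nm{V} \ge \tfrac12) \ge c_1 t^{\dim W}$. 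Taking $t = c_0/\sqrt{\Phi_n} \ge c_0/(\sqrt k\,\Delta)$ (legitimate while $\Phi_n \le k\Delta^2$), we get: conditionally on $\mathcal{F}_n$, and whenever $\Phi_n \le k\Delta^2$, step $n+1$ is good with probability at least $q := c_2\Delta^{-p^*}$. Now set $\ell := \lceil k\Delta^2/\eta\rceil + 1$: a run of $\ell$ consecutive good steps from any time $s$ raises $\Phi$ by more than $k\Delta^2$ and so drives it above $k\Delta^2$ within $(s, s+\ell]$; hence, since $\P(\text{good} \mid \mathcal{F}_m) \ge q$ holds as long as $\Phi_m \le k\Delta^2$ (and one never needs it past the first time $\Phi$ exceeds $k\Delta^2$, by which point $\tau$ has occurred), iterated conditioning gives $\P(\tau \le s+\ell \mid \mathcal{F}_s) \ge q^\ell$ for every $s$. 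Applying this at $s = 0, \ell, \dots, (\lfloor N/\ell\rfloor - 1)\ell$ yields $\P(\tau > N) \le (1-q^\ell)^{\lfloor N/\ell\rfloor} \le \exp\bigl(-q^\ell N/(2\ell)\bigr)$. Finally $q^\ell = \exp(-p^*\ell\log\Delta + O(\ell))$ and $\ell = \Theta(\Delta^2)$, while $\Delta^2\log\Delta = (\tfrac{c^2}{2} + o(1))\log N$; so choosing $c$ small enough makes $q^\ell N/(2\ell) \ge \sqrt N$ for large $N$, hence $\P(D(N) \le \Delta) \le \P(\tau > N) \le N^{-2}$. (When $k > d$ the subspace $W_n$ may be all of $\R^d$; one then argues directly from $\sum_i Y_i^n = 0$ that the set of directions forcing $\Phi$ to increase still has measure $\gtrsim \Delta^{-O(1)}$, which only improves the exponent above.)

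The main obstacle is the geometric step: exhibiting, from \emph{every} admissible bin-configuration rather than merely a typical one, an event of probability $\gtrsim \Delta^{-p^*}$ on which $\Phi$ must increase by a fixed amount no matter which bin the strategy picks. This rests on the deficiency vectors $Y_i^n$ always spanning a subspace of dimension at most $\min(k-1,d)$ together with the anti-concentration of $P_W V$ near the origin at scale $\P(\nm{P_W V} \le t) \asymp t^{\dim W}$. It is exactly here that $d \ge 2$ is used: in one dimension there is no forced increase — one can always put the incoming vector into a bin whose current deficiency it partially cancels — consistent with the statement failing for $d = 1$.
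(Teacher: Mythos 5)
Your approach for $k=2$ (track a squared distance, force an increase whenever the incoming vector lands in a thin slab orthogonal to the current displacement, and amplify via runs and Borel--Cantelli) is in spirit the same as the paper's. Where you genuinely depart is for $k\ge 3$: you introduce the potential $\Phi_n=\sum_i\nm{Y_i^n}^2$ with $Y_i^n=B_i^n-\tfrac1k S_n$ and try to force an increase of $\Phi$ simultaneously against all $k$ bin choices, whereas the paper sidesteps this entirely by merging the $k$ bins into two auxiliary bins $\B{A}_1,\B{A}_2$ (with a weighting when $k$ is odd) and rerunning the $k=2$ argument. Your recursion $\Phi_{n+1}-\Phi_n=2\langle Y_{i_0}^n,V_{n+1}\rangle+\tfrac{k-1}{k}\nm{V_{n+1}}^2$ is correct, and when $k\le d$ (so $\dim W_n\le k-1<d$) the anti-concentration step $\P(\nm{P_{W_n}V}\le t,\,\nm V\ge\tfrac12)\gtrsim t^{\dim W_n}$ is fine and the final exponent bookkeeping checks out.

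However, there is a genuine gap for $k>d$, exactly where your parenthetical waves it away. When the deficiency vectors span all of $\R^d$, it is not true that some set of directions of measure $\gtrsim\Delta^{-O(1)}$ forces $\Phi$ to increase; in fact there can be \emph{no} admissible $V$ forcing an increase. Take $d=2$, $k=3$ and an equilateral configuration $Y_1=\rho(1,0)$, $Y_2=\rho(-\tfrac12,\tfrac{\sqrt3}{2})$, $Y_3=\rho(-\tfrac12,-\tfrac{\sqrt3}{2})$, so $\Phi_n=3\rho^2$. For any $V$ with $\nm V=r\ge\tfrac12$ one computes $\min_i\langle Y_i,V\rangle\le -\rho r/2$, so the strategy that places $V_{n+1}$ in the bin minimising $\langle Y_{i},V_{n+1}\rangle$ gives
\[
\Phi_{n+1}-\Phi_n\le -\rho r+\tfrac{2}{3}r^2\le r\bigl(-\rho+\tfrac23\bigr)<0\quad\text{whenever }\rho>\tfrac23 .
\]
Thus for any $\rho\ge 1$, say, which is far below your threshold $\Phi_n\le k\Delta^2$, every incoming $V$ of norm at least $\tfrac12$ admits a bin choice that \emph{decreases} $\Phi$. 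Your claimed per-step lower bound $\P(\text{good}\mid\mathcal F_n)\ge c_2\Delta^{-p^*}$, which must hold uniformly over all configurations with $\Phi_n\le k\Delta^2$, is therefore false for $k>d$, and the parenthetical ``only improves the exponent'' is not a correct fix. The standard repair is precisely what the paper does: collapse the bins into two groups $\B{A}_1,\B{A}_2$, observe $\nm{A_1^n-A_2^n}\lesssim k\, D(n)$ by the triangle inequality, and run the two-bin argument (with the weight $(1+1/k')$ when $k$ is odd) on the one-dimensional displacement direction $e_n$, for which the $\gtrsim 1/\nm{\delta_n}$ slab estimate is valid in every configuration.
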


The above proposition immediately highlights the difference between the one-dimensional partitioning problem and the same problem in higher dimensions. Indeed, if we have a sequence of i.i.d.\ vectors distributed according to some common distribution $\mu$ on $\Bb^1 = [-1,1]$, then the trivial partitioning strategy that assigns a vector $V$ to the bin with the largest sum if $V < 0$ and to the bin with the smallest sum if $V>0$ shows that in one dimension, we may uniformly ensure that $D(T) \le 1$ for any number of bins $k$ and any distribution $\mu$.

In an attempt to match the lower bound in Proposition~\ref{lowbound}, we consider two different partitioning strategies below. Note that for any $d, k \in \N$ and any distribution $\mu$ on $\Bb^d$, we may, as discussed earlier, toss each element of $\B{V}(\mu)$ into one of the $k$ bins uniformly at random and thereby ensure with high probability that $D(T) = O(\sqrt{T\log T})$. To improve on this trivial bound, we shall have to work a bit harder.

The first partitioning strategy we propose, which we call the \emph{inner product rule}, is as follows: simply assign $V_{n+1}$ to the bin $\B{B}_i$ for which $\langle V_{n+1}, B_i^n \rangle$ is minimal, breaking ties arbitrarily. Intuitively, this should keep the bins close together since we always add a vector to the bin it is `most opposite' to. We shall show that the inner product rule is a near-optimal partitioning strategy for any reasonably well-behaved probability distribution. Recall that a measure $\mu$ on $\R^d$ is \emph{H\"older continuous (with respect to the Lebesgue measure)} if there exist constants $K, \alpha >0$ such that $\mu(S) \le K\lambda_d(S)^\alpha$ for any measurable set $S \subset \R^d$; the following bounds for the inner product rule are essentially tight.
\begin{theorem}\label{ipupbound}
Fix $d,k \in \N$ with $k \ge 2$, let $\mu$ be a probability distribution on $\Bb^d \subset \R^d$, and suppose that we partition $\B{V}(\mu)$ into $k$ bins using the inner product rule. Then almost surely,
\begin{equation}\limsup_{T\to\infty}\left(\frac{D(T)}{(\log T)^{1/2}} \right) = O(1);\label{gen_mu}
\end{equation}
if $\mu$ is additionally H\"older continuous, then almost surely,
\begin{equation}
\limsup_{T\to\infty}\left(\frac{D(T) (\log\log T)^{1/2}}{(\log T)^{1/2}} \right) = O(1).\label{nice_mu}
\end{equation}
\end{theorem}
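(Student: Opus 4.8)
The plan is to track a quadratic potential measuring the ``spread'' of the bins and to exploit an asymmetry in its evolution: the potential can rise by at most a bounded amount at each step, whereas it tends to fall by a lot whenever the bins are far apart. Write $\bar B^n=\frac1k\sum_iB^n_i$ and $W^n_i=B^n_i-\bar B^n$, so $\sum_iW^n_i=0$, and set $\Phi_n=\sum_{i=1}^k\nm{W^n_i}^2$. Since $\nm{B^n_i-B^n_j}\le 2\max_a\nm{W^n_a}$, we have $D(T)^2\le 4\max_{n\le T}\Phi_n$, so it is enough to control $\max_{n\le T}\Phi_n$. If $V_{n+1}$ is placed in the bin $\B B_I$, a short computation using $\sum_aW^n_a=0$ gives
\[\Phi_{n+1}-\Phi_n=2\langle W^n_I,V_{n+1}\rangle+\tfrac{k-1}{k}\nm{V_{n+1}}^2,\]
and since the inner product rule chooses $I$ to minimise $\langle W^n_a,V_{n+1}\rangle$ over $a$, whose average over $a$ is zero, we get $\langle W^n_I,V_{n+1}\rangle\le 0$ and hence $\Phi_{n+1}-\Phi_n\le 1$ always. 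Moreover, writing $d_n=\max_{a,b}\nm{W^n_a-W^n_b}$ and $\hat e_n$ for a corresponding unit direction, another application of $\sum_aW^n_a=0$ shows $\langle W^n_I,V_{n+1}\rangle=\min_a\langle W^n_a,V_{n+1}\rangle\le-c_k\,d_n\,|\langle\hat e_n,V_{n+1}\rangle|$ for some constant $c_k>0$; and $d_n\ge\sqrt{\Phi_n/k}$.

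The problem now reduces to estimating the Laplace transform $\psi(s,u)=\E\!\left[e^{-s|\langle u,V\rangle|}\right]$ ($V\sim\mu$, $u$ a unit vector), and the two parts of the theorem correspond to two \emph{uniform-in-$u$} bounds for it. First, suppose $\mu$ is not supported on any hyperplane through the origin. Then $\sup_u\mu(\{x:|\langle u,x\rangle|\le\eta\})<1$ for some $\eta>0$: this is a compactness argument on the unit sphere, the point being that if it failed for every $\eta$ then, extracting a limiting direction $u_*$, one would get $\mu(u_*^\perp)=1$, i.e.\ $\mu$ supported on a hyperplane. Consequently $\psi(s,u)\le\mu(\{|\langle u,\cdot\rangle|\le\eta\})+e^{-s\eta}\le 1-\delta'$ for all $u$ and all $s\ge s_0$, with $\delta',s_0>0$ depending only on $\mu$. (If instead $\mu$ is supported on a hyperplane $H$ through the origin, then every $W^n_i\in H$ and the restriction of the rule to $H$ is again the inner product rule, so we may pass to $H$ and induct on dimension, the base case being $\mu=\delta_0$, for which $D\equiv 0$.) Second, suppose $\mu$ is H\"older continuous. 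A slab $\{x\in\Bb^d:|\langle u,x\rangle|\le\eta\}$ has Lebesgue measure $O(\eta)$, hence $\mu$-measure $O(\eta^\alpha)$, and optimising $\eta$ in the displayed bound on $\psi$ yields $\psi(s,u)\le C_\mu\,s^{-\beta}$ for all $u$ and all $s\ge 1$, for some $\beta=\beta(\alpha)>0$.

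Now fix $\theta>0$ and consider $M_n=e^{\theta\Phi_n}$. Combining the increment identity with the bound $\langle W^n_I,V_{n+1}\rangle\le-c_kd_n|\langle\hat e_n,V_{n+1}\rangle|$ and $\nm{V_{n+1}}\le 1$,
\[\E\!\left[e^{\theta(\Phi_{n+1}-\Phi_n)}\,\Cond\,\C F_n\right]\le e^{\theta}\,\E\!\left[e^{-2c_k\theta d_n|\langle\hat e_n,V_{n+1}\rangle|}\,\Cond\,\C F_n\right]=e^{\theta}\,\psi\!\left(2c_k\theta d_n,\hat e_n\right),\]
and, using $d_n\ge\sqrt{\Phi_n/k}$ and the decay of $\psi$, the right-hand side is $\le 1$ once $\Phi_n$ exceeds a threshold $L_0=L_0(\theta,\mu)$. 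A routine excursion/optional-stopping argument (bounding the first passage of $\Phi$ to level $x$ by decomposing over the last visit to $[0,L_0]$ and applying the optional stopping theorem to $M_n$ on $\{\Phi>L_0\}$) then gives $\P(\max_{n\le T}\Phi_n\ge x)\le T\,e^{-\theta(x-L_0-1)}$ for $x>L_0+1$. For \eqref{gen_mu} we take $\theta=\theta_*:=\log\frac1{1-\delta'}$ and $L_0$ a constant, obtaining an exponential tail; a Borel--Cantelli argument along $T=2^m$ (the quantity $\max_{n\le T}\Phi_n$ being monotone in $T$) gives $\max_{n\le T}\Phi_n=O(\log T)$ almost surely, i.e.\ $D(T)=O(\sqrt{\log T})$. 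For \eqref{nice_mu}, using $\psi(s,u)\le C_\mu s^{-\beta}$ one finds $L_0(\theta)$ grows only like $e^{2\theta/\beta}/\theta^2$, so when aiming at height $x$ we may take $\theta=\theta(x)\asymp\beta\log x$ while keeping $L_0(\theta(x))=o(x)$; the same argument now yields $\P(\max_{n\le T}\Phi_n\ge x)\le T\exp(-c\,x\log x)$, and solving $x\log x\asymp\log T$ gives $\max_{n\le T}\Phi_n=O(\log T/\log\log T)$ almost surely, which is \eqref{nice_mu}.

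The main obstacle, and the step separating the two bounds, is the choice in the H\"older case of a \emph{height-dependent} exponent $\theta(x)\asymp\log x$ in the supermartingale: it is exactly the one-sided increment bound $\Phi_{n+1}-\Phi_n\le 1$ (which lets one factor $\E[e^{\theta(\Phi_{n+1}-\Phi_n)}\Cond\C F_n]\le e^\theta\psi(\cdot)$) together with the polynomial decay of $\psi$ that allows $\theta$ to be taken this large, upgrading the tail exponent from linear in $x$---all one can get for an arbitrary $\mu$---to order $x\log x$. The other points needing care are the two uniform-in-$u$ estimates for $\psi$, in particular the sphere-compactness argument and the accompanying dimension reduction in the general case.
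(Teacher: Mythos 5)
Your proof is correct, and while it controls the same quadratic potential as the paper (your $\Phi_n=\sum_i\nm{W_i^n}^2$ equals $S_n/k$ where $S_n=\sum_{i<j}\nm{B_i^n-B_j^n}^2$ is the paper's observable, and your one-sided increment bound $\Phi_{n+1}-\Phi_n\le 1$ is the paper's $S_{n+1}-S_n\le k-1$), the way you convert the self-correcting drift into a tail bound is genuinely different. The paper works combinatorially: it looks at the last $r\asymp\ell$ steps before $S$ first exceeds $\ell$, notes $S$ must stay in $[\ell/2,\ell)$ on that window, and then counts drop events there via a Chernoff bound (in the general case, using a decomposition of $\mathbb{S}^{d-1}$ into finitely many directional pieces with $\mu$-mass bounded below; in the H\"older case, using a slice estimate to show a single-step drop fails with probability $O(\ell^{-c})$). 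You instead encode the drop probability analytically via the Laplace transform $\psi(s,u)=\E[e^{-s|\langle u,V\rangle|}]$ and run an exponential supermartingale $e^{\theta\Phi_n}$ above a threshold $L_0(\theta)$ with an excursion/optional-stopping argument; the two regimes ($\psi\le 1-\delta'$ uniformly vs.\ $\psi\le C s^{-\beta}$) play the role of the paper's Claim 4.3 vs.\ Claim 4.5. The content is equivalent — your sphere-compactness reduction to the hyperplane case mirrors the paper's Lemma 4.2, and your polynomial decay of $\psi$ is the paper's slice estimate — but the supermartingale packaging is cleaner and makes the source of the $\sqrt{\log\log T}$ improvement transparent (the height-dependent exponent $\theta(x)\asymp\log x$, which is admissible precisely because of the one-sided increment bound and the polynomial decay of $\psi$). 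A couple of points you've left implicit are worth spelling out if you write this up fully: the inequality $\min_a\langle W^n_a,V\rangle\le -\tfrac1k\,d_n|\langle\hat e_n,V\rangle|$ uses that $k$ reals summing to zero with range $R$ have minimum $\le -R/k$; and the optional stopping step should be phrased over the bounded stopping time $\sigma_n\wedge T$ starting from the last visit to $[0,L_0]$, to make the union bound over the $T$ possible starting times rigorous. Neither causes any trouble.
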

In the light of Proposition~\ref{lowbound}, it is immediately clear that~\eqref{nice_mu} is essentially best-possible. The discrepancy between the bound~\eqref{nice_mu} for well-behaved distributions and the bound~\eqref{gen_mu} for arbitrary distributions in Theorem~\ref{ipupbound} is not just an artefact of our proof: somewhat surprisingly, the next proposition demonstrates the existence of (slightly pathological) distributions which show that~\eqref{gen_mu} is also nearly tight.

\begin{proposition}\label{badexist}
For every increasing function $\omega \colon  \R_{>0} \to \R_{>0}$ that grows without bound, there exists a probability distribution $\mu_\omega$ on $\Bb^2 \subset \R^2$ for which the following holds. If we partition $\B{V}(\mu_\omega)$ into two bins using the inner product rule, then almost surely,
\[\limsup_{T\to\infty}\left(\frac{D(T) \omega(T)}{(\log T)^{1/2}} \right) \ge 1.\]
\end{proposition}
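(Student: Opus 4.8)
The plan is to design a distribution $\mu_\omega$ on $\Bb^2$ supported mostly on (or extremely close to) a single line through the origin, so that the inner product rule is forced to behave essentially one-dimensionally along that line — and hence can do nothing to control the component of $B_1^n - B_2^n$ in the orthogonal direction. Concretely, fix the line $\ell = \{(x,0) : x \in \R\}$, and let $\mu_\omega$ put almost all of its mass on $\ell$, distributed so that the one-dimensional part has mean zero (say symmetric), together with a tiny amount of mass — governed by $\omega$ — on a second direction. The point is that when $V_{n+1} \in \ell$, the quantity $\langle V_{n+1}, B_i^n\rangle$ depends only on the first coordinates of the $B_i^n$, so the inner product rule's choice is entirely determined by the first coordinates and is \emph{blind} to the second coordinates. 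The second coordinate of $B_1^n - B_2^n$ therefore evolves as a sum of $\pm(\text{small})$ increments whose signs are fixed by an auxiliary (effectively one-dimensional) process and cannot be steered.

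The key steps, in order: (i) Choose the one-dimensional marginal on $\ell$ to be, say, uniform on $\{-1,+1\}$ or uniform on $[-1,1]$, symmetric about $0$; the classical one-dimensional analysis (as in the remark after Proposition~\ref{lowbound}) shows the first coordinates of the bins stay $O(1)$ apart, but more importantly the \emph{number} of steps assigned to each bin among the first $T$ steps is comparable to $T/k$, and the bin-assignment sequence is close to a fair coin sequence. (ii) Superimpose a rare ``perturbing'' atom: with probability $p_n$ depending on $\omega$, the revealed vector is instead $(\,\sqrt{1-\delta^2},\,\delta\,)$ or $(\,\sqrt{1-\delta^2},\,-\delta\,)$ for a small fixed $\delta>0$ (or, to keep $\mu$ a single fixed distribution rather than a triangular array, use a single atom at $(\cos\theta,\sin\theta)$ with very small weight $q$, chosen so that $\omega$-many such atoms accumulate slowly). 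When such a vector appears, its first coordinate is positive and of order $1$, so the inner product rule sends it to whichever bin currently has the \emph{smaller} first coordinate — again a choice made without reference to second coordinates — and this nudges the second-coordinate difference by $\pm\delta$ with a sign that is essentially an independent fair coin (conditioned on which bin is ``behind'' in the first coordinate, which by symmetry is unbiased). (iii) Over the first $T$ steps, roughly $qT$ perturbing vectors appear (by the strong law), the induced second-coordinate difference $\langle B_1^n - B_2^n, e_2\rangle$ performs a mean-zero random walk with $\sim qT$ steps of size $\delta$, and by the law of the iterated logarithm this walk has $\limsup$ of order $\sqrt{qT \log\log(qT)}$ along a subsequence. (iv) Finally, tune the weight $q = q_\omega$ (equivalently the rate at which perturbing mass is seen) so small — shrinking appropriately with the scale — that $\sqrt{qT\log\log T}$ lands between $(\log T)^{1/2}/\omega(T)$ and, say, $(\log T)^{1/2}$: here we exploit that $\omega$ grows without bound so we have room to place the walk's typical size just above $(\log T)^{1/2}/\omega(T)$. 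This forces $D(T) \ge \langle B_1^n - B_2^n, e_2\rangle \ge (\log T)^{1/2}/\omega(T)$ infinitely often, which is the claimed bound.

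I would carry out step (iv) by a slightly more careful device than a single atom: since $\omega$ can grow arbitrarily slowly, I would let $\mu_\omega$ concentrate, at ``dyadic scales'', a schedule of perturbations — i.e. arrange the second-coordinate magnitudes $\delta_m$ and frequencies so that within the block of times $[2^m, 2^{m+1}]$ the accumulated perturbation is of order $(\log T)^{1/2}/\omega(T)$ for $T \approx 2^m$. One clean way: let $\mu_\omega$ have an atom of mass $q$ at each of the two points $(\cos\theta, \pm\sin\theta)$ with $\theta$ tiny and $q$ tiny, and then the second-coordinate difference after $T$ steps is a fair random walk with $\Theta(qT)$ steps of size $\sin\theta$, giving typical size $\Theta(\sin\theta \cdot \sqrt{qT})$ and, along a suitable subsequence via the Chung–Erdős lower bound for the maximum of a random walk (or just Kolmogorov's LIL), a value $\gtrsim \sin\theta \sqrt{qT \log\log T}$. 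Choosing $\sin\theta$ and $q$ with $\sin\theta \sqrt{q} \cdot \sqrt{\log\log T} \cdot \omega(T) \asymp (\log T)^{1/2}$ asymptotically (possible because we only need a matching along a subsequence and $\omega$ is monotone and unbounded) completes the argument.

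The main obstacle I anticipate is \textbf{making the sign of each perturbation genuinely unbiased and (sufficiently) independent} — i.e. verifying rigorously that the inner product rule, when it places a perturbing vector, does so into each bin with probability bounded away from $0$ and $1$, uniformly in the history. This requires showing that the ``first-coordinate race'' between the two bins is, with probability bounded below, in a near-tie (equivalently, that the signed first-coordinate difference returns to a neighbourhood of $0$ linearly often), so that the tie-break on second coordinates is not systematically skewed. For the symmetric $\pm 1$ marginal this is a statement about a lazy $\pm 1$ walk (the ``leader'' in the one-dimensional balls-in-bins process) spending a positive fraction of time near $0$, which follows from standard recurrence/occupation-time estimates; I would isolate this as a lemma. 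Secondarily, one must ensure the rare perturbing atoms do not themselves destabilise the first-coordinate bound from step (i) — but since they contribute total first-coordinate mass $O(qT)$ and one can always reassign the very next few ordinary vectors to compensate, and in any case we only need a \emph{lower} bound on $D(T)$, this is a minor point: even if the first coordinates drift, the second coordinate alone already witnesses the desired bound.
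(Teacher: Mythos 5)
There is a genuine and, I believe, fatal gap: your core premise, that when a vector $(\cos\theta,\pm\sin\theta)$ arrives the inner product rule decides ``without reference to second coordinates,'' is false once the second-coordinate difference becomes large. Writing $\delta=B_1^n-B_2^n$, the rule compares $\langle V,\delta\rangle=\cos\theta\,\delta_1\pm\sin\theta\,\delta_2$, and as soon as $\sin\theta\,|\delta_2|$ exceeds $\cos\theta\,|\delta_1|$ the \emph{second} coordinate drives the decision. In that regime the rule always assigns the perturbing vector so that $|\delta_2|$ decreases: a direct computation shows that, for $\delta_2>0$ and $|\delta_1|<\tan\theta\,\delta_2$, \emph{both} atoms $(\cos\theta,\pm\sin\theta)$ are sent to the bin that lowers $\delta_2$. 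Since the one-dimensional component of the rule keeps $|\delta_1|=O(1)$, the second coordinate is confined to $|\delta_2|\lesssim\cot\theta\cdot\sup_n|\delta_1|+\sin\theta=O(1)$. In other words, the construction gives $D(T)=O(1)$, not a bound growing like $(\log T)^{1/2}/\omega(T)$. More generally, for any single fixed atom $V$ in the support the rule corrects $\langle\delta,V\rangle$ whenever $V$ arrives, so with finitely many atoms spanning $\R^2$ (plus a diffuse component that is itself corrected by Theorem~\ref{ipupbound}) one cannot escape $O(1)$ this way. Your anticipated obstacle is also pointed in the wrong direction: when the first-coordinate race is in a near-tie, the tie-break via the second coordinate is not unbiased but \emph{maximally} biased towards restoring $\delta_2$.

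The paper's construction sidesteps this obstruction with a qualitatively different mechanism. It places atoms at $(0,1/2)$ and at $(1/L_s,-1/2)$ for an unbounded sequence of scales $L_s$, and shows that, starting from a small region $\C{S}$ near the origin, an alternating run of $(0,1/2)$ and $(1/L,-1/2)$ forces the inner product rule to put \emph{both} vectors into the \emph{same} bin at every step: the vertical component keeps flipping sign and dominates each individual decision, so the rule never ``sees'' the slow accumulation of a horizontal drift of $(1/L,0)$ per pair. Over $\Theta(L^2)$ steps $\delta_1$ grows to $\Theta(L)$ even though every single decision is locally optimal. Because the atoms live at infinitely many scales $L_s\to\infty$, this covert drift can be made arbitrarily slow, which is what ties the length scale $L_s$ to the block length $T_s=\exp(\Theta(s^2L_s^2))$ and lets one calibrate the construction to the prescribed $\omega$. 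This is not a ``blind direction'' argument but a systematic-misdirection argument, and I do not see how to repair your approach without essentially rediscovering it.
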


The other strategy we consider is motivated by more practical considerations: in applications, where the number of bins $k$ is often large, it is usually too expensive to compute $k$ inner products to make each decision. With this in mind, we investigate the following strategy, a higher dimensional analogue of the `two random choices' strategy studied by Azar, Broder, Karlin and Upfal~\citep{pow2}, which we call the \emph{best-of-two rule}. Unlike the inner product rule, the best-of-two rule is a randomised strategy: given $V_{n+1}$, we choose two bins $\B{B}_i$ and $\B{B}_j$ randomly from the set of all bins (without replacement) and assign $V_{n+1}$ to $\B{B}_i$ if $\langle V_{n+1}, B^n_i \rangle \le \langle V_{n+1}, B^n_j \rangle$ and to $\B{B}_j$ otherwise, breaking ties arbitrarily. This strategy achieves a reduction in computational complexity, but this reduction comes at a price: the following estimate for the best-of-two rule is essentially tight.

\begin{theorem}\label{twobinupbound}
Fix $d,k \in \N$ with $k \ge 2$, and let $\mu$ be any probability distribution on $\Bb^d\subset \R^d$. If we partition $\B{V}(\mu)$ into $k$ bins using the best-of-two rule, then almost surely,
\[ \limsup_{T\to\infty} \left(\frac{D(T)}{\log T} \right) =O(1).\]
\end{theorem}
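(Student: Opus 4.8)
The plan is to track a quadratic potential measuring the spread of the bins, show that it experiences a strong inward drift, and then upgrade this into an exponential tail bound that is uniform in time, from which the $O(\log T)$ bound on the running maximum follows by a union bound and Borel--Cantelli. Write $S_n = \sum_{j\le n}V_j$ for the running total and set $C_i^n = B_i^n - \tfrac1k S_n$, so that $\sum_{i}C_i^n = 0$, $\nm{B_i^n - B_j^n} = \nm{C_i^n - C_j^n}$, and, by the standard identity for centred vectors,
\[ \Psi_n := \sum_{i=1}^k \nm{C_i^n}^2 = \frac1k\sum_{1\le i<j\le k}\nm{B_i^n - B_j^n}^2 . \]
Since $\max_{i,j}\nm{B_i^n - B_j^n}^2 \le k\Psi_n$, it suffices to prove that $\max_{n\le T}\Psi_n = O\bigl((\log T)^2\bigr)$ almost surely. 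First I would expand one step: if $V = V_{n+1}$ is placed in bin $a$, then $\Psi_{n+1} - \Psi_n = 2\langle V, C_a^n\rangle + \tfrac{k-1}{k}\nm V^2 \le 2\langle V, C_a^n\rangle + 1$. Because shifting every $\langle V, B_\ell^n\rangle$ by the common quantity $\tfrac1k\langle V, S_n\rangle$ leaves all comparisons unchanged, the best-of-two rule — having sampled an unordered pair $\{i,j\}$ uniformly at random — places $V$ in whichever of the two bins minimises $\langle V, C_\cdot^n\rangle$, so $\langle V, C_a^n\rangle = \min\{\langle V, C_i^n\rangle, \langle V, C_j^n\rangle\}$. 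Using $\min\{x,y\} = \tfrac12(x+y) - \tfrac12\lvert x-y\rvert$, averaging over the random pair, and recalling $\sum_\ell C_\ell^n = 0$, one gets
\[ \E\bigl[\langle V_{n+1}, C_a^n\rangle \bigm| \mathcal{F}_n\bigr] = -\frac{1}{2\binom k2}\sum_{1\le i<j\le k}\E\bigl\lvert \langle V, C_i^n - C_j^n\rangle\bigr\rvert , \]
where on the right $V\sim\mu$ and the $C_i^n$ are the $\mathcal{F}_n$-measurable values.

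The crux is turning this into a genuine negative drift valid for \emph{every} $\mu$. Let $W \subseteq \R^d$ be the closed linear span of $\operatorname{supp}\mu$; then every $V_n$, and hence every difference $C_i^n - C_j^n$, lies in $W$. The map $w \mapsto \E\lvert\langle V, w\rangle\rvert$ is a seminorm on $W$ that vanishes only at the origin (for $\langle V, w\rangle = 0$ almost surely forces $\operatorname{supp}\mu \subseteq w^{\perp}$, i.e.\ $w \perp W$), so by compactness of the unit sphere of $W$ there is a constant $c = c(\mu) > 0$ with $\E\lvert\langle V, w\rangle\rvert \ge c\nm w$ for all $w \in W$; the degenerate case $W = \{0\}$ is trivial since then $D(T) \equiv 0$. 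Substituting this and using $\sum_{i<j}\nm{C_i^n - C_j^n} \ge \bigl(\sum_{i<j}\nm{C_i^n - C_j^n}^2\bigr)^{1/2} = (k\Psi_n)^{1/2}$, I obtain
\[ \E[\Psi_{n+1} - \Psi_n \mid \mathcal{F}_n] \le 1 - c'\sqrt{\Psi_n} \]
for a constant $c' = c'(k,\mu) > 0$.

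I would then pass to $X_n := \sqrt{\Psi_n}$. The formula for $\Psi_{n+1} - \Psi_n$ together with $\nm{C_a^n} \le \sqrt{\Psi_n}$ and $\nm V \le 1$ shows $\lvert X_{n+1} - X_n\rvert \le 2$; and by Jensen $\E[X_{n+1}\mid\mathcal{F}_n] \le \bigl(\E[\Psi_{n+1}\mid\mathcal{F}_n]\bigr)^{1/2} \le (X_n^2 - c'X_n + 1)^{1/2}$, which is at most $X_n - c'/4$ once $X_n$ exceeds a threshold $L_0 = L_0(k,\mu)$. Thus $(X_n)$ is a nonnegative process with increments of size at most $2$ and drift at most $-c'/4$ whenever $X_n \ge L_0$. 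A standard drift argument now gives an exponential tail uniform in $n$: for $\eta>0$ small enough one has $\E[e^{\eta X_{n+1}}\mid\mathcal{F}_n] \le (1 - \eta c'/8)\,e^{\eta X_n}$ on $\{X_n \ge L_0\}$, while trivially $e^{\eta X_{n+1}} \le e^{\eta(L_0 + 2)}$ on $\{X_n < L_0\}$, so $\E[e^{\eta X_{n+1}}\mid\mathcal{F}_n] \le (1-\eta c'/8)\,e^{\eta X_n} + e^{\eta(L_0+2)}$ always; iterating (with $X_0=0$) yields $\sup_n \E e^{\eta X_n} < \infty$, hence $\P(X_n \ge L) \le A e^{-\eta L}$ for all $n$ and $L$, with $A,\eta$ depending only on $d,k,\mu$. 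A union bound gives $\P(\max_{n\le T} X_n \ge L) \le T A e^{-\eta L}$; taking $L = \eta^{-1}(2\log T + \log A)$ and applying Borel--Cantelli along $T = 2^m$ shows that almost surely $\max_{n\le T}X_n = O(\log T)$, and therefore $D(T) \le \sqrt k\,\max_{n\le T}X_n = O(\log T)$ almost surely, as required.

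The one-step expansion of $\Psi_n$, the seminorm estimate, and the passage to $\sqrt{\Psi_n}$ are all routine. I expect the only real content to lie in two places: first, the observation that restricting attention to $W = \overline{\operatorname{span}}(\operatorname{supp}\mu)$ is exactly what makes the linear lower bound $\E\lvert\langle V, w\rangle\rvert \ge c\nm w$ available no matter how degenerate $\mu$ is — this is where the ``holds for all $\mu$'' part of the statement is won — and second, the drift argument that converts the $O(1)$ steady-state scale of $\sqrt{\Psi_n}$ into an $O(\log T)$ bound for its running maximum, i.e.\ producing a tail estimate that is genuinely uniform in $n$ rather than merely a stationarity statement.
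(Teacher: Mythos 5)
Your proof is correct, and it takes a genuinely different route from the paper's in the two places that carry the real content. For the self-correcting drift, the paper fixes a single pair of bins, tracks $A_n=\nm{B_1^n-B_2^n}^2$, decomposes according to which pair of bins gets sampled at each step ($E^+$, $E^-$, $E_i$, and then further into $E_i(\pm,\pm)$), and combines the pieces into $\E[A_{n+1}-A_n\mid A_n]\le 1-C\sqrt{A_n}$, finishing with a union bound over pairs. You instead track the aggregate $\Psi_n=\sum_i\nm{C_i^n}^2$ over \emph{all} pairs at once — interestingly, up to the factor $k$ this is exactly the observable $S_n$ the paper uses in its analysis of the inner-product rule, so your choice unifies the two theorems — and the identity $\min\{x,y\}=\tfrac12(x+y)-\tfrac12|x-y|$ together with $\sum_i C_i^n=0$ collapses the whole case analysis into one line. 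That is a genuine simplification. Your treatment of degeneracy (restricting to $W=\overline{\operatorname{span}}(\operatorname{supp}\mu)$, where $w\mapsto\E|\langle V,w\rangle|$ is a norm by compactness) does the same job as the paper's induction on dimension via a null hyperplane, just packaged more compactly. For converting negative drift into a uniform-in-$n$ tail, the paper uses stopping times $\C{U}_j,\C{L}_j$ and an Azuma--Hoeffding bound on the compensated process $A_{t+\C{U}}+t\ell$, whereas you pass to $X_n=\sqrt{\Psi_n}$, note bounded increments and constant inward drift above a threshold, and run a Foster--Lyapunov exponential-moment estimate $\E e^{\eta X_{n+1}}\le(1-\eta c'/8)\E e^{\eta X_n}+O(1)$; both are standard and of comparable difficulty, yours being perhaps marginally more streamlined. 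Two trivial blemishes: the constant $2$ in $|X_{n+1}-X_n|\le 2$ should really be something like $3$ (one has $\Psi_{n+1}\le(\sqrt{\Psi_n}+1)^2$ and $\Psi_{n+1}\ge\Psi_n-2\sqrt{\Psi_n}-1$, which gives an absolute bound only slightly larger than $2$), and the union bound $TAe^{-\eta L}$ with $L=\eta^{-1}(2\log T+\log A)$ gives $T^{-1}$ rather than a summable bound, so the dyadic subsequence $T=2^m$ you invoke is genuinely needed — but you say as much. The argument is sound as written.
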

Note that the best-of-two rule is identical to the inner product rule when $k = 2$. That Theorem~\ref{twobinupbound} is essentially best-possible when $k \ge 3$ is evidenced by the simple observation that if $\mu$ is the uniform distribution on $\Bb^1=[-1,1]$, then with high probability, there exists an interval of length $\Omega(\log T)$ in the first $T$ steps where we repeatedly choose the same pair of bins and only see numbers exceeding $1/2$. To prove Theorem~\ref{twobinupbound}, we shall show that the best-of-two rule enforces `self-correction'. Similar methods based on self-correction have recently been used to answer some long-standing questions about random graph processes; see~\citep{bohman, morris, keevash}, for example.
 
This paper is organised as follows. We give the proof of Proposition~\ref{lowbound} in Section~\ref{s:lowbound}. Section~\ref{s:ip} is devoted to analysing the inner product rule. We then address the best-of-two rule in Section~\ref{s:2b}. We finally conclude this note with a discussion of some open problems in Section~\ref{s:conc}. For the sake of clarity of presentation, we systematically omit floor and ceiling signs whenever they are not crucial.

\section{Lower bounds}\label{s:lowbound}
This section is devoted to the proof of Proposition~\ref{lowbound}, our strategy-agnostic lower bound. For completeness, we first record the following fact about the size of `slices' of the $d$-dimensional unit ball.
\begin{proposition}\label{slice}
For any $d\in\N$, there exist constants $C, c >0$ such that for any $e \in \Ss^{d-1}$ and any $0 \le b \le 1$,
\[c b \le \lambda_d (\{x: x \in \Bb^d \text{ and } \lvert\langle x,e \rangle\rvert \le b\}) \le C b. \eqno\qed\]
\end{proposition}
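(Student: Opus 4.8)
The plan is to reduce the statement to a one-dimensional integral and then estimate that integral by hand. Since $d$-dimensional Lebesgue measure is invariant under orthogonal transformations, I would first assume without loss of generality that $e$ is the first standard basis vector $e_1$, so that the set whose measure we wish to bound is the slab $S_b = \{x \in \Bb^d : \lvert x_1 \rvert \le b\}$. Slicing perpendicular to the $x_1$-axis and applying Fubini's theorem then gives
\[\lambda_d(S_b) = \int_{-b}^{b} \lambda_{d-1}\bigl(\{y \in \R^{d-1} : \nm{y} \le \sqrt{1-t^2}\,\}\bigr)\,dt = \kappa_{d-1}\int_{-b}^{b} (1-t^2)^{(d-1)/2}\,dt,\]
where $\kappa_{d-1} = \lambda_{d-1}(\Bb^{d-1})$, with the convention $\kappa_0 = 1$ so that the formula is also valid when $d = 1$.

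The upper bound is then immediate: since $0 \le (1-t^2)^{(d-1)/2} \le 1$ for every $t \in [-1,1]$, the integral above is at most $2b$, and the claim holds with $C = 2\kappa_{d-1}$. For the lower bound I would simply discard the part of the slab near its two flat faces, where the cross-sections shrink: restricting the range of integration to $\lvert t \rvert \le \min\{b,1/2\}$, the integrand is at least $(3/4)^{(d-1)/2}$ throughout, while the interval of integration has length at least $b$ because $b \le 1$ forces $2\min\{b,1/2\} \ge b$. This yields $\lambda_d(S_b) \ge \kappa_{d-1}(3/4)^{(d-1)/2}\,b$, so the claim holds with $c = \kappa_{d-1}(3/4)^{(d-1)/2}$.

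This argument is entirely elementary and I do not anticipate any genuine obstacle; the only point that needs a little care is in the lower bound, where one should not try to estimate $(1-t^2)^{(d-1)/2}$ uniformly on all of $[-b,b]$ — that bound degenerates as $b \to 1$ — but instead bound the integrand away from zero only on a sub-interval of length comparable to $b$, which is precisely what keeps the constant $c$ independent of $b$.
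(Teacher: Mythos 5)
The paper states Proposition~\ref{slice} without proof (it is recorded as a standard fact, with the $\square$ placed directly after the statement), so there is no argument in the paper to compare against; your slicing-by-Fubini proof is correct and is the natural elementary argument one would expect. The reduction to $e = e_1$ via rotation invariance, the identity $\lambda_d(S_b) = \kappa_{d-1}\int_{-b}^b (1-t^2)^{(d-1)/2}\,dt$, the trivial upper bound from $(1-t^2)^{(d-1)/2}\le 1$, and the lower bound obtained by restricting to $\lvert t\rvert \le \min\{b,1/2\}$ and using $2\min\{b,1/2\}\ge b$ are all sound, including the $d=1$ edge case under your convention $\kappa_0=1$.
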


We now prove Proposition~\ref{lowbound}.

\begin{proof}[Proof of Proposition~\ref{lowbound}]
First, suppose that $k=2$ and consider any partitioning strategy that partitions $\B{V}(\mu)$ into two bins. Let $\delta_n = B_1^n - B_2^n$ and write $e_n$ for the unit vector in the direction of $\delta_n$. Note that we have
\[\nm{\delta_{n+1}}^2 = \nm{\delta_{n}}^2 + \nm{V}^2 + 2\nm{\delta_{n}}\langle V, e_n \rangle,\]
where $V=V_{n+1}$ if the strategy assigns $V_{n+1}$ to $\B{B}_1$ and $V=-V_{n+1}$ otherwise.

Consider the event $E_{n+1} = \{ 1/2 \le \nm{\delta_{n+1}}^2 - \nm{\delta_{n}}^2 \le 5/4\}$. We claim that regardless of the partitioning strategy used, we have
\begin{equation}\label{En}
\P(E_{n+1} \Cond \delta_n) \ge \frac{c_d}{\nm{\delta_{n}}},
\end{equation} 
where $c_{d} >0$ is a constant depending on the dimension $d$ alone. Indeed, regardless of which bin we assign $V_{n+1}$ to, if $\langle V_{n+1}, e_n \rangle \in [-(8\nm{\delta_{n}})^{-1}, (8\nm{\delta_{n}})^{-1}]$ and $\nm{V_{n+1}} \ge 3/4$, then $E_{n+1}$ holds. Since $\mu$ is the uniform distribution and $\B{V}(\mu)$ is an i.i.d.\ sequence, the claimed bound~\eqref{En} follows easily from Proposition~\ref{slice}.

Now, break the set $[T]$ into $r = T/m$ disjoint blocks $\Scr{T}_1, \Scr{T}_2, \dots, \Scr{T}_{r}$ each of length $m$ for some $m = m(T)$ that grows slowly with $T$ (and will be specified later). We say that a block $\Scr{T}$ is \emph{good} if $\nm {\delta_n}^2 \ge m/2$ for some $n \in \Scr{T}$. Now, for $i \in[r]$, consider the block $\Scr{T}_i = \{t+1, t+2, \dots, t+m\}$ with $t = (i-1)m$ and note, writing $F_i$ for the event that $\nm{\delta_t}^2 < m/2$, that
\[
\P(\Scr{T}_i \text{ is good} \Cond F_i) \ge \P(E_{t+1} \cap E_{t+2} \cap \dots \cap E_{t+m} \Cond F_i).
\]
Using~\eqref{En} and the fact that $\B{V}(\mu)$ is an i.i.d.\ sequence, we see that
\[\P(E_{t+j+1} \Cond F_i \cap E_{t+1} \cap E_{t+2} \cap \dots \cap E_{t+j}) \ge \frac{c_d}{\sqrt{m/2 + 5j/4}} \ge \frac{c_d}{2\sqrt{m}}.\]
It follows that
\[\P(\Scr{T}_i \text{ is good} \Cond F_i) \ge \left(\frac{c_d}{2\sqrt{m}}\right)^{m}.\]
Using the Markov property, we now deduce that
\[\P\left(D(T) < (m/2)^{1/2}\right) \le \prod_{i = 1}^{r} (1-\P(\Scr{T}_i \text{ is good} \Cond F_i)) \le \exp\left(-\frac{T}{m} \left(\frac{c_d}{2\sqrt{m}}\right)^{m}\right).\]
Applying the above bound with $m = \log T/ \log \log T$, we conclude that
\[\P\left(D(T) < \left({\frac{\log T}{2 \log \log T}}\right)^{1/2}\right) \le T^{-2}\] 
for all sufficiently large $T$; the proposition, in the case where $k = 2$, now follows from the Borel--Cantelli lemma.

In contrast to the situation with the arguments for upper bounds (that follow in subsequent sections), we may easily obtain a lower bound in the case where the number of bins exceeds two from the argument above that deals with the case of exactly two bins. Indeed when $k >2$, we proceed by `merging' the bins $\B{B}_1, \B{B}_2, \dots , \B{B}_{k'}$ and the bins $\B{B}_{k'+1}, \B{B}_{k'+2}, \dots, \B{B}_k$ into two auxiliary bins $\B{A}_1$ and $\B{A}_2$ respectively, where $k' = \lfloor k/2 \rfloor$; in other words, we set $A^n_1 = \sum_{i =1}^{k'} B^n_i$ and  $A^n_2 = \sum_{i = 1}^k B^n_i - A^n_1$ for each $n \in \N$. If $k$ is even, then we finish the proof as follows. By the argument above, it is clear that regardless of the partitioning strategy used, there exists an $n \in [T]$ for which
\[\nm {A_1^n - A_2^n} \ge \left({\frac{\log T}{2 \log \log T}}\right)^{1/2}\]
with probability at least $1-T^{-2}$; the result now follows from the triangle inequality. If $k$ is odd on the other hand, then the result follows from an analogous argument where we track
$\nm {(1+1/k')A_1^n - A_2^n}$ instead of $\nm {A_1^n - A_2^n}$.
\end{proof}

\section{The inner product rule}\label{s:ip}
We shall analyse the inner product rule in this section. We need the following standard Chernoff-type bound; see~\citep{textbook} for a proof.
\begin{proposition}\label{chernoff}
If $X_{1},X_2,\dots,X_{n}$ are independent random variables taking values in $\{0,1\}$, then writing $X=\sum_{i=1}^{n}X_{i}$, we have
\[\P(X \le \E[X]/2) \le \exp\left(- \E[X] / 8\right).\eqno\qed \]
\end{proposition}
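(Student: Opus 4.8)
The plan is to prove this via the standard exponential moment method (the Bernstein--Chernoff trick). First I would fix a parameter $t>0$ and apply Markov's inequality to the non-negative random variable $e^{-tX}$: since the event $\{X \le \E[X]/2\}$ coincides with $\{e^{-tX} \ge e^{-t\E[X]/2}\}$, we obtain
\[\P(X \le \E[X]/2) \le e^{t\E[X]/2}\,\E\bigl[e^{-tX}\bigr].\]

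Next I would use independence to factor the moment generating function as $\E[e^{-tX}] = \prod_{i=1}^{n}\E[e^{-tX_{i}}]$. Writing $p_{i}=\E[X_{i}]=\P(X_{i}=1)$, each factor equals $1-p_{i}+p_{i}e^{-t}=1+p_{i}(e^{-t}-1)$, which by the elementary inequality $1+x\le e^{x}$ is at most $\exp(p_{i}(e^{-t}-1))$. Multiplying over $i$ and abbreviating $\mu=\E[X]=\sum_{i}p_{i}$ gives $\E[e^{-tX}]\le\exp(\mu(e^{-t}-1))$, and hence
\[\P(X \le \mu/2) \le \exp\!\left(\mu\left(\tfrac{t}{2}+e^{-t}-1\right)\right).\]

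Finally I would optimise the exponent over $t>0$: differentiating $t/2+e^{-t}-1$ yields $1/2-e^{-t}$, which vanishes at $t=\ln 2$, and substituting back makes the exponent $\mu\bigl(\tfrac{\ln 2}{2}+\tfrac12-1\bigr)=-\tfrac{1-\ln 2}{2}\,\mu$. Since $(1-\ln 2)/2 = 0.1534\ldots \ge 1/8$, this establishes $\P(X \le \E[X]/2)\le\exp(-\E[X]/8)$, as claimed.

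There is no real obstacle here: the argument is entirely routine, and the only genuine choice is the value of $t$. Indeed, the slack between $(1-\ln 2)/2$ and $1/8$ means any $t$ in a small neighbourhood of $\ln 2$ works just as well — the constant $1/8$ in the statement is a convenient round number rather than the sharp exponent produced by this method, so one could even avoid calculus and simply plug in, say, $t=1/2$ or $t=\ln 2$ and check the resulting numerical inequality by hand.
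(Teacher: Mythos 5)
Your proof is correct, and it is precisely the standard exponential-moment (Chernoff) argument — Markov applied to $e^{-tX}$, factorisation by independence, the bound $1+x\le e^x$, and optimisation at $t=\ln 2$ — which is the argument in the textbook the paper cites for this proposition without reproducing it. Your side remark is also right: $(1-\ln 2)/2 \approx 0.153 > 1/8$, so the stated constant $1/8$ (the one arising from the simplified form $\exp(-\delta^2\mu/2)$ at $\delta=1/2$) has slack and any $t$ near $\ln 2$ suffices.
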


We start by proving Theorem~\ref{ipupbound}.

\begin{proof}[Proof of Theorem~\ref{ipupbound}]
Given $m \ge 0$, we wish to bound $\P(D(T) \ge m)$ from above. Somewhat surprisingly, this is harder to do in the case where $k \ge 3$ as opposed to when $k = 2$. Indeed, to control $D(T)$, we need to control the distance between each pair of bins; however, if we attempt to control these distances \emph{individually}, we quickly run into difficulties because we cannot say much about how the distance between a \emph{particular} pair of bins changes at each time (unless $k = 2$). The trick is to instead track the observable 
\[S_n = \sum_{1\le i<j \le k} \nm{\delta_n(i,j)}^2,\] 
where $\delta_n(i,j) = B_i^n - B_j^n$ for all $i,j \in [k]$.

First, writing $e_n(i,j)$ for the unit vector in the direction of $\delta_n(i,j)$, note that
\[\nm{\delta_{n+1}(i,j)}^2 = \nm{\delta_n(i,j)}^2 + \nm{V}^2 + 2\nm{\delta_{n}(i,j)}\langle V, e_n(i,j) \rangle,\] 
where 
\begin{enumerate}
\item $V = V_{n+1}$ if $V_{n+1}$ is assigned to $\B{B}_i$, 
\item $V = -V_{n+1}$ if $V_{n+1}$ is assigned to $\B{B}_j$, and 
\item $V = 0$ otherwise.
\end{enumerate}
In particular, under the inner product rule, we have
\[\nm{\delta_{n+1}(i,j)}^2 = \nm{\delta_n(i,j)}^2 + \nm{V_{n+1}}^2 - 2\nm{\delta_{n}(i,j)} \lvert \langle V_{n+1}, e_n(i,j)\rangle\rvert \]
if $V_{n+1}$ is assigned to either $\B{B}_i$ or $\B{B}_j$, and $\nm{\delta_{n+1}(i,j)} = \nm{\delta_n(i,j)}$ otherwise. Hence, if $V_{n+1}$ is assigned to some bin $\B{B}_h$, then
\begin{equation}
S_{n+1} - S_n \leq (k-1) - 2\sum_{i \in [k]: i\ne h}\nm{\delta_n({h,i})} \lvert \langle V_{n+1}, e_n(h,i)\rangle\rvert. \label{obsable}
\end{equation}

Next, writing $\ell = km^2/2$, note that
\[\P(D(T) \ge m) \le \P\left( \left(\max_{1 \le n \le T} S_n \right) \ge \ell \right);\]
indeed, as a consequence of the triangle inequality, we have 
\[2\nm{\delta_n(h,i)}^2 + 2\nm{\delta_n(h,j)}^2 \ge \nm{\delta_n(i,j)}^2\] for any $h, i , j \in [k]$; summing this estimate over all $h \in [k]$, we deduce that
\[\frac{2S_n}{k} \ge \max_{ 1\le i,j \le k}  \,\nm{\delta_n(i,j)}^2.
\]
Consequently, writing $E_n (\ell) = \{S_1 < \ell\} \cap \{S_2 < \ell\} \cap \dots \cap \{S_{n-1} < \ell\} \cap \{S_n \ge \ell\}$, we have
\[
\P(D(T) \ge m) \le \sum_{n=1}^T\P(E_n(\ell)).
\]
Next, set $r = \ell /2k$. Note that for $n \leq r$, we have $S_n \le n(k-1) < \ell$, so it follows that $\P(E_n(\ell)) = 0$. For $n \ge r+1$, we define
\[F_n(\ell) = \left\{ \frac{\ell} {2} \le S_{n-r} < \ell\right\} \cap \left\{\frac{\ell} {2} \le S_{n-r+1} < \ell\right\} \cap \dots \cap \left\{\frac{\ell} {2} \le S_{n-1} < \ell\right\} \cap \{S_n \ge \ell\}.\] 
Under the inner product rule, we know (see~\eqref{obsable}) that $S_{n+1} - S_n \le k-1$ for all $n \in \N$, so it is clear if $S_n \ge \ell$, then $S_{n-t} \ge \ell/2$ for each $t \in \{0, 1, \dots, r\}$. Therefore, it is clear that $E_n(\ell) \subset F_n(\ell)$ for each $n \ge r+1$, so
\begin{equation}
\P(D(T) \ge m) \le \sum_{n=r+1}^T\P(F_n(\ell)).\label{bd_strat}
\end{equation}

We shall estimate $\P(F_n(\ell))$ by studying how our observable can change in a single step using~\eqref{obsable}, which in turn will allow us to bound $\P(D(T) \ge m)$ using~\eqref{bd_strat}. We need slightly different arguments depending on whether or not the underlying distribution $\mu$ is well-behaved. The key difference between the two cases is that the probability that the change in our observable in a single step is `bad' decays with $\ell$ in the case where $\mu$ is well-behaved (see Claim~\ref{step-2}), but is merely bounded away from $1$ in general (see Claim~\ref{step-1}).

\textbf{Case 1: Arbitrary distributions.}
We first establish~\eqref{gen_mu} for an arbitrary probability distribution $\mu$ on $\Bb^d$. We proceed by induction over the dimension. The result is trivial in the case where $d = 1$ as the inner product rule coincides with the trivial one-dimensional strategy described in Section~\ref{s:results}. Now, suppose that $d > 1$ and that we have established the required bound in dimension $d-1$.

The starting point of our argument is the following observation.
\begin{lemma}~\label{decomp}
For any probability distribution $\mu$ on $\Bb^d$, either there exists a hyperplane $\C{H}\subset\R^d$ passing through the origin such that $\mu (\C{H} \cap \Bb^d) = 1$, or there exist disjoint measurable sets $\C{A}_1, \C{A}_2, \dots, \C{A}_d \subset \Bb^d$ and constants $c, p >0$ such that
\begin{enumerate}
\item for every unit vector $e \in \Ss^{d-1}$, there exists $i \in [d]$ such that $\lvert\langle x, e\rangle \rvert \ge c$ for all $x \in \C{A}_i$, and
\item $\mu(\C{A}_i) \ge p$ for each $1 \le i \le d$.
\end{enumerate}
\end{lemma}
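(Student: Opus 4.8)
The dichotomy is between a degenerate situation (the mass of $\mu$ lies on a hyperplane through the origin) and a genuinely $d$-dimensional one. The plan is to assume we are not in the degenerate case and construct the sets $\C{A}_1, \dots, \C{A}_d$ and the constants $c, p$ by a compactness argument. First I would set up the right notion: say a finite collection of sets $\C{A}_1, \dots, \C{A}_d \subset \Bb^d$ with positive mass is \emph{$c$-spanning} if for every $e \in \Ss^{d-1}$ some $\C{A}_i$ is `$c$-transverse' to $e$, meaning $|\langle x, e\rangle| \ge c$ for all $x \in \C{A}_i$. The goal is to show: if no hyperplane through the origin carries all the mass, then a $c$-spanning family with all masses at least $p$ exists for some $c, p > 0$.

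The key observation is a local-to-global step. Fix a direction $e \in \Ss^{d-1}$. Since $\mu(\{x : \langle x, e\rangle = 0\}) < 1$ is not automatic — the hyperplane $e^\perp$ might still carry all the mass — here is where the hypothesis enters: if \emph{every} $e^\perp$ carried all the mass we would be in the degenerate case (taking any one such $e$), so we only know there is \emph{some} direction that is not degenerate. To get a set transverse to \emph{every} direction simultaneously we need more. The clean route: for each $e \in \Ss^{d-1}$, the hypothesis applied to the hyperplane $e^\perp$ fails, so $\mu(\{x : |\langle x,e\rangle| \ge c_e\}) > 0$ for some $c_e > 0$; by a continuity/openness argument (the map $e \mapsto \mu(\{x : |\langle x, e\rangle| > t\})$ is lower semicontinuous for fixed $t$), there is an open neighbourhood $U_e$ of $e$ in $\Ss^{d-1}$ and a constant $c_e' > 0$ and a set $\C{A}_e$ of positive mass that is $c_e'$-transverse to every direction in $U_e$. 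By compactness of $\Ss^{d-1}$, finitely many $U_{e_1}, \dots, U_{e_N}$ cover the sphere, giving a $c$-spanning family $\C{A}_{e_1}, \dots, \C{A}_{e_N}$ (with $c = \min c_{e_j}'$ and $p = \min \mu(\C{A}_{e_j})$) — except that this produces $N$ sets, not exactly $d$.

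Reducing from $N$ sets to exactly $d$ is the main obstacle, and I would handle it as follows. A $c$-spanning family of size $N$ can be pruned to a subfamily of size at most $d$ that is still $c'$-spanning for a possibly smaller $c'$: indeed, the condition ``$\C{A}_i$ is $c$-transverse to $e$'' says $e$ avoids a slab around $\C{A}_i^\perp$; being $c$-spanning means the slabs $\{e : |\langle x, e\rangle| < c \text{ for some } x \in \C{A}_i\}$ do not cover $\Ss^{d-1}$ jointly — wait, it is the complements that cover. The cleanest formulation: for a direction $e$ to be ``missed'' by $\C{A}_i$ we'd need a point $x \in \C{A}_i$ with $|\langle x,e\rangle|$ small; if the $\C{A}_i$ are chosen to be small (e.g. subsets of small balls, which we may do since positive-mass sets contain positive-mass pieces of arbitrarily small diameter), then ``$\C{A}_i$ is $c$-transverse to $e$'' is essentially ``the center $x_i$ of $\C{A}_i$ satisfies $|\langle x_i, e\rangle| \gtrsim c$'', so $c$-spanning becomes: the points $\pm x_1/\|x_1\|, \dots, \pm x_N/\|x_N\|$ form a $c$-net-like configuration whose perpendicular slabs cover $\Ss^{d-1}$. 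By a standard fact — $d$ hyperplanes through the origin in general position suffice, and one can always select $d$ of the $x_i$ that are linearly independent and still have their slabs cover, shrinking $c$ — one extracts exactly $d$ sets. If fewer than $d$ of the $x_i$ span, then all the $x_i$ (hence, after a limiting/approximation argument using smallness of the $\C{A}_i$, essentially all of $\mu$) lie near a proper subspace, which after a compactness argument forces the mass onto a hyperplane through the origin, i.e. the degenerate case. So the real content is this linear-algebra extraction combined with the smallness-of-sets reduction; the measure-theoretic parts (lower semicontinuity, compactness, splitting positive-mass sets) are routine.
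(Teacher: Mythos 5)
Your approach is correct in outline but takes a genuinely different and more circuitous route than the paper. You go \emph{directions-first}: for each $e\in\Ss^{d-1}$ you carve out a positive-mass piece transverse to directions near $e$, cover the sphere by compactness, and then try to prune the resulting $N$ pieces down to $d$. The paper goes \emph{support-first}: the set of $\mu$-heavy points (the topological support of $\mu$) either lies in a hyperplane $\C{H}$ through the origin --- in which case a finite-cover argument over the compact complement of $\C{H}_\eps$ gives $\mu(\C{H}_\eps)=1$ for every $\eps$, hence $\mu(\C{H})=1$ and we are in the degenerate case --- or it spans $\R^d$, in which case one simply picks $d$ linearly independent $\mu$-heavy points $x_1,\dots,x_d$, places small disjoint balls around them, and applies compactness on $\Ss^{d-1}$ once to get uniform $c,p>0$. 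Starting from the support completely sidesteps the $N$-to-$d$ pruning, which you rightly identify as the awkward step, and this is what makes the paper's proof a few lines long.

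Two soft spots in your pruning step. You suggest that if the centres $x_1,\dots,x_N$ of your small pieces fail to span $\R^d$, then the mass lies near a proper subspace and we fall back to the degenerate case; in fact this should be an outright contradiction, not a fallback: if all centres lay in a proper subspace $V$, take $e\in V^\perp\cap\Ss^{d-1}$, and no small-diameter piece centred in $V$ can be $c$-transverse to $e$, contradicting the covering you already established. There is also a mild chicken-and-egg issue --- the pieces must be small relative to the eventual transversality scale $c$, which is only determined after the finite subcover --- but this is fixable by a two-pass argument: cover first to fix $c$, then shrink the pieces to diameter below $c$ (shrinking only improves transversality, so the cover survives). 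Finally, ``$c$-spanning'' is not quite the same as the normalised centres forming a net; the correct statement is simply that for each $e$ some centre has $|\langle x_i,e\rangle|$ bounded below, which together with spanning lets you extract $d$ linearly independent centres and rerun compactness on $\Ss^{d-1}$. All of this is repairable, but it amounts to rediscovering, with extra bookkeeping, the fact the paper states directly: the support of a non-degenerate $\mu$ spans $\R^d$.
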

\begin{proof}
We say that a point $x \in \Bb^d$ is \emph{$\mu$-heavy} if $\mu(\C{U}) > 0$ for every open neighbourhood $\C{U}$ of $x$. If the set of $\mu$-heavy points is contained in some hyperplane $\C{H}$ passing through the origin, then it follows by compactness that for any $\eps > 0 $, $\mu (\C{H}_\eps) = 1$, where $\C{H}_\eps$ is the set of points at distance less than $\eps$ from $\C{H}$; as $\mu$ is a probability measure, it follows that $\mu(\C{H}) = 1$. Therefore, we may suppose that there exist $\mu$-heavy points $x_1, x_2, \dots, x_d$ such that no hyperplane passing through the origin contains all of these points; in other words, we may assume that for every $e \in \Ss^{d-1}$, there exists an $i \in [d]$ such that $\lvert\langle x_i, e \rangle\rvert > 0$. This implies the conclusion of the lemma, once again by compactness.
\end{proof}

We now apply Lemma~\ref{decomp} to $\mu$: if $\mu (\C{H} \cap \Bb^d) = 1$ for some hyperplane $\C{H}$ passing through the origin, then we are done by induction; we may therefore assume that there exist $\C{A}_1, \C{A}_2, \dots, \C{A}_d$ and $c,p>0$ as promised by Claim~\ref{decomp}.

To bound $\P(F_n(\ell))$ from above, we first estimate, for each $t \ge 0 $, the probability of the event 
\[I_t(\ell) = \left\{S_{t+1} \le S_t - {c\sqrt{\ell}}/{k} \right\}.\]

\begin{claim}\label{step-1}
For each $t \ge 0$ and all sufficiently large $\ell$, we have 
\[\P(I_t(\ell) \Cond \{S_{t} \ge \ell/2\}) \ge p.\]
\end{claim}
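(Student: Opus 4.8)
The plan is to show that, conditioned on $\{S_t \ge \ell/2\}$, with probability at least $p$ the new vector $V_{t+1}$ lands in one of the special sets $\C{A}_i$ that is `well-spread' relative to the current configuration, and that whenever this happens the inner product rule forces $S_{t+1} \le S_t - c\sqrt{\ell}/k$.

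First I would unpack what $S_t \ge \ell/2$ buys us. By the triangle inequality bound established just above (namely $2S_n/k \ge \max_{i,j}\nm{\delta_n(i,j)}^2$), there is \emph{some} pair of bins at distance $\ge \sqrt{\ell/k}$; more usefully, if $S_t \ge \ell/2$ then at least one bin $\B{B}_h$ has $\sum_{i \ne h}\nm{\delta_t(h,i)}^2$ comparable to $\ell/k$, so when $V_{t+1}$ is actually assigned to a bin by the inner product rule, the right-hand side of~\eqref{obsable} will have a large negative contribution \emph{provided} $V_{t+1}$ has a non-negligible inner product with the unit vector $e_t(h,i)$ pointing along some long edge at $h$. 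Here is where Lemma~\ref{decomp} enters: fix the bin $h$ that the inner product rule will choose (this is determined by $V_{t+1}$ and the current state, but we can reason after the fact), pick a long edge direction $e = e_t(h,i)$, and choose $j \in [d]$ with $\lvert\langle x, e\rangle\rvert \ge c$ for all $x \in \C{A}_j$; then on the event $\{V_{t+1} \in \C{A}_j\}$, which has probability at least $p$ by part (ii) of the lemma and independence of the i.i.d.\ sequence, the term $2\nm{\delta_t(h,i)}\lvert\langle V_{t+1}, e_t(h,i)\rangle\rvert \ge 2c\nm{\delta_t(h,i)} = \Omega(c\sqrt{\ell}/k)$. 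Since the $(k-1)$ additive term in~\eqref{obsable} is $O(1)$ and hence negligible compared to $\sqrt{\ell}$ for large $\ell$, this yields $S_{t+1} - S_t \le (k-1) - 2c\nm{\delta_t(h,i)} \le -c\sqrt{\ell}/k$, as required (after absorbing constants and using `sufficiently large $\ell$').

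The subtle point — and what I expect to be the main obstacle — is the circularity in `fix the bin $h$ that the rule will choose': the choice of $h$ depends on $V_{t+1}$, which is also the random variable whose membership in $\C{A}_j$ we want to control. The clean way around this is to argue uniformly over all possible vectors and all resulting choices of $h$: for \emph{every} value $v$ that $V_{t+1}$ could take, the inner product rule assigns $v$ to some bin $\B{B}_{h(v)}$ (or possibly to no bin, if $d=1$, but we are in the inductive case $d>1$ with the non-degenerate alternative of Lemma~\ref{decomp}); we need that no matter which $h(v)$ arises, $v$'s inner product with at least one long edge direction at $h(v)$ is $\ge c$. But this is not automatic from membership in a \emph{single} $\C{A}_j$ — which is precisely why Lemma~\ref{decomp} gives us $d$ sets $\C{A}_1,\dots,\C{A}_d$ with the covering property in part (i). The correct statement to prove is: since $S_t \ge \ell/2$, there is a bin $h^\star$ and a unit direction $e^\star$ (along a long edge at $h^\star$) that depend only on the state at time $t$, not on $V_{t+1}$; apply part (i) with $e = e^\star$ to get $j^\star \in [d]$ with $\lvert\langle x, e^\star\rangle\rvert \ge c$ on $\C{A}_{j^\star}$; then on $\{V_{t+1}\in\C{A}_{j^\star}\}$ we must check the rule really does put $V_{t+1}$ somewhere that decreases $S$. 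If the rule puts $V_{t+1}$ in $h^\star$ or in the other endpoint of that long edge, we are done immediately by~\eqref{obsable}. If it puts $V_{t+1}$ elsewhere, we use that the rule is a \emph{minimiser} of $\langle V_{t+1}, B^t_\cdot\rangle$: the decrease in $S$ from assigning $V_{t+1}$ to bin $\B{B}_a$ is governed by $\sum_{i\ne a}\nm{\delta_t(a,i)}\lvert\langle V_{t+1},e_t(a,i)\rangle\rvert$, and one shows via Cauchy--Schwarz / an averaging argument that the rule's choice does at least as well as assigning to $h^\star$ would.

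Finally I would assemble these pieces: the state at time $t$ is $\mathcal{F}_t$-measurable, $V_{t+1}$ is independent of $\mathcal{F}_t$, so $\P(V_{t+1}\in\C{A}_{j^\star}\mid\mathcal{F}_t) = \mu(\C{A}_{j^\star}) \ge p$ on $\{S_t\ge\ell/2\}$, and on this event the deterministic implication above gives $I_t(\ell)$; taking conditional expectation yields $\P(I_t(\ell)\mid\{S_t\ge\ell/2\})\ge p$. The only genuinely delicate verification is the `rule does at least as well' step when the chosen bin is neither endpoint of the designated long edge; I would handle that by a short convexity argument, possibly at the cost of shrinking $c$ by a factor depending only on $k$, which is harmless since $c$ was already only required to be a positive constant depending on $d$, $k$, $\mu$.
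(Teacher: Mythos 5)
Your roadmap matches the paper's proof structurally: fix a long edge, say $(h^\star,i^\star)$, determined solely by the state at time $t$ (the paper relabels so this is $(1,2)$, the pair realising the maximum distance), note that $\nm{\delta_t(h^\star,i^\star)}^2 \ge S_t/k^2 \ge \ell/(2k^2)$, apply Lemma~\ref{decomp} to $e^\star = e_t(h^\star,i^\star)$ to obtain a set $\C{A}_{j^\star}$ with $\mu(\C{A}_{j^\star})\ge p$ on which $\lvert\langle V_{t+1},e^\star\rangle\rvert\ge c$, and then push through~\eqref{obsable}. You also correctly diagnose the only subtlety, namely that the bin $h$ chosen by the rule depends on $V_{t+1}$ itself.

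Where you fall short is the resolution of that subtlety: you defer to ``Cauchy--Schwarz / an averaging argument / a short convexity argument, possibly at the cost of shrinking $c$,'' but none of those is the right tool, and no constant needs to be sacrificed. The paper's resolution is a one-line monotonicity observation. Let $\B{B}_+$ and $\B{B}_-$ be the bins maximising and minimising $\langle V_{t+1}, B^t_\cdot\rangle$. The inner product rule always assigns $V_{t+1}$ to $\B{B}_-$, so the sum in~\eqref{obsable} runs over $i\ne -$ and in particular contains the summand $\nm{\delta_t(-,+)}\,\lvert\langle V_{t+1}, e_t(-,+)\rangle\rvert = \langle V_{t+1}, B_+^t - B_-^t\rangle$. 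By extremality of $\B{B}_\pm$ this satisfies $\langle V_{t+1}, B_+^t - B_-^t\rangle \ge \lvert\langle V_{t+1}, B_{h^\star}^t - B_{i^\star}^t\rangle\rvert = \nm{\delta_t(h^\star,i^\star)}\,\lvert\langle V_{t+1}, e^\star\rangle\rvert \ge (c/k)\sqrt{\ell/2}$, \emph{no matter which bin $h=-$ turns out to be}. So the ``rule does at least as well'' step you were groping for is simply: the inner-product gap over the extremal pair $(+,-)$ dominates the gap over any fixed pair $(h^\star,i^\star)$, and $(+,-)$ always contributes a term to the sum because the assigned bin is always $\B{B}_-$. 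Filling this in turns your sketch into the paper's proof verbatim, with no loss in $c$.
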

\begin{proof}
Relabelling the bins if necessary, suppose that the largest distance between a pair of bins at time $t$ is the distance between the bins $\B{B}_1$ and $\B{B}_2$. If $S_t \ge \ell/2$, then it must be the case that $\nm{\delta_t(1,2)}^2 \ge S_t / k^2 \ge \ell / 2k^2$. We know from Lemma~\ref{decomp} that with probability at least $p$, we have
\[ \lvert \langle V_{t+1}, \delta_t(1,2)\rangle \rvert \ge \frac{\lvert \langle V_{t+1}, e_t(1,2)\rangle \rvert \sqrt{\ell}}{\sqrt{2}k} \ge \frac{c}{k}\sqrt{\frac{\ell}{2}}.
\]
Consider the bins $\B{B}_\pm$ for which the inner products $\langle V_{t+1}, B^t_\pm \rangle$ are maximal and minimal. By definition, $V_{t+1}$ gets assigned to $\B{B}_-$ under the inner product rule. Now, since
\[ \left\langle V_{t+1}, B_+^t - B_-^t\right\rangle \ge \lvert \langle V_{t+1}, \delta_t(1,2)\rangle \rvert \ge \frac{c}{k}\sqrt{\frac{\ell}{2}}, \]
it follows from~\eqref{obsable} that if $S_{t} \ge \ell/2$, then with probability at least $p$, we have
\[
S_{t+1} \le S_t + (k-1) - \frac{2c}{k}\sqrt{\frac{\ell}{2}} \le S_t - \frac{c\sqrt{\ell}}{k},
\]
where last inequality holds provide $\ell$ is sufficiently large; this proves the claim.
\end{proof}

Consider any interval of $r$ steps in which our observable lies in the range $[\ell/2, \ell]$ and note that since our observable increases by at most $k-1$ at each step, there are at most $rk^2/c\sqrt{\ell}$ steps in this interval where our observable decreases by at least ${c\sqrt{\ell}}/{k}$. Consequently, if $F_n(\ell)$ holds, then there are at most $rk^2/c\sqrt{\ell} \le rp/2$ different values of $t \in \{n-r, n-r+1, \dots, n-1\}$ for which the event $I_t(\ell)$ holds, provided $\ell$ is sufficiently large. Using the Markov property, we deduce from Claim~\ref{step-1} and Proposition~\ref{chernoff} that for all $n \in \N$, we have
\[\P(F_n(\ell)) \le \exp \left({-rp/8}\right).\] 

We know that $\P(D(T) \ge m) \le \sum_{n=1}^T\P(F_n(\ell))$, so it is now clear that for all $m \ge 0$, we have
\begin{equation}
\P(D(T) \ge m) \le T \exp (-rp/8),\label{DM-1}
\end{equation}
where $\ell = km^2/2$, $r = \ell / 2k$ and  $p > 0$ is a constant depending on $d$ and $\mu$ alone. It follows from~\eqref{DM-1} that $D(T) = O((\log T)^{1/2})$ with probability at least $1-T^{-2}$; the required bound~\eqref{gen_mu} now follows from the Borel--Cantelli lemma.

\textbf{Case 2: H\"older continuous distributions.}
We now show how we may improve on~\eqref{gen_mu} for well-behaved distributions. It turns out that if $\mu$ is H\"older continuous, then it is possible to say a lot more about how our observable changes in a single step than in the general case. 

The starting point in this case is to bound, for each $t \ge 0 $, the probability of the event 
\[J_t(\ell) = \{S_{t+1} - S_{t} \ge -k\}.\]

\begin{claim}\label{step-2}
If $\mu$ is H\"older continuous, then for each $t \ge 0$,
\[ \P(J_t(\ell) \Cond  \{S_{t} \ge \ell/2\}) \le \frac{C}{\ell^c},
\]
where $C, c>0$ are constants depending on $d$, $k$ and $\mu$ alone.
\end{claim}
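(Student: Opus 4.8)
The plan is to show that, conditionally on $\{S_t \ge \ell/2\}$, the event $J_t(\ell)$ forces the next vector $V_{t+1}$ to be almost orthogonal to a direction that is already pinned down by $V_1,\dots,V_t$, and then to bound the probability of such near-orthogonality using the H\"older continuity of $\mu$ together with Proposition~\ref{slice}. I would begin by conditioning on the $\sigma$-algebra $\C{F}_t$ generated by $V_1,\dots,V_t$ (and any internal randomness used by the strategy up to time $t$). On the event $\{S_t \ge \ell/2\}$, let $(a,b)$ be the lexicographically least pair attaining $\max_{i<j}\nm{\delta_t(i,j)}$; this pair, and hence the unit vector $e_t(a,b)$, is $\C{F}_t$-measurable, and since $\binom{k}{2}\nm{\delta_t(a,b)}^2 \ge S_t \ge \ell/2$ we have $\nm{\delta_t(a,b)} \ge \sqrt{\ell}/k$.

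Next I would reuse the one-step mechanism from the proof of Claim~\ref{step-1}. Suppose $J_t(\ell)$ holds and let $\B{B}_h$ be the bin to which the inner product rule assigns $V_{t+1}$. Combining $S_{t+1}-S_t \ge -k$ with the one-step estimate~\eqref{obsable} gives $\sum_{i\ne h}\nm{\delta_t(h,i)}\,\lvert\langle V_{t+1}, e_t(h,i)\rangle\rvert \le k-\tfrac12$. On the other hand, letting $\B{B}_+$ be a bin for which $\langle V_{t+1}, B^t_+\rangle$ is maximal, the inequalities $\langle V_{t+1}, B^t_+\rangle \ge \langle V_{t+1}, B^t_a\rangle$ and $\langle V_{t+1}, B^t_h\rangle \le \langle V_{t+1}, B^t_b\rangle$ (together with the same pair of inequalities with $a,b$ swapped) show that $\langle V_{t+1}, B^t_+ - B^t_h\rangle \ge \lvert\langle V_{t+1}, \delta_t(a,b)\rangle\rvert$, and the left-hand side is $\nm{\delta_t(h,+)}\lvert\langle V_{t+1}, e_t(h,+)\rangle\rvert$, one of the (nonnegative) terms of the sum above. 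Hence $\lvert\langle V_{t+1}, e_t(a,b)\rangle\rvert \le (k-\tfrac12)/\nm{\delta_t(a,b)} \le b_k/\sqrt{\ell}$ for a constant $b_k$ depending only on $k$, so on $\{S_t \ge \ell/2\}$ the event $J_t(\ell)$ is contained in $\{\lvert\langle V_{t+1}, e_t(a,b)\rangle\rvert \le b_k/\sqrt{\ell}\}$.

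Finally, since $V_{t+1}\sim\mu$ is independent of $\C{F}_t$, the conditional probability of this last event given $\C{F}_t$ equals $\mu(\{x\in\Bb^d : \lvert\langle x, e_t(a,b)\rangle\rvert \le b_k/\sqrt{\ell}\})$; by H\"older continuity this is at most $K\lambda_d(\{x\in\Bb^d : \lvert\langle x, e_t(a,b)\rangle\rvert \le b_k/\sqrt{\ell}\})^\alpha$, which by the upper bound in Proposition~\ref{slice} is $O(\ell^{-\alpha/2})$ once $\ell \ge b_k^2$ (the finitely many smaller values of $\ell$ being absorbed into the constant). Averaging over $\C{F}_t$ restricted to $\{S_t \ge \ell/2\}$ then yields the claim with $c = \alpha/2$. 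I do not expect a serious difficulty here: the only delicate point is the measurability/independence bookkeeping — ensuring that the relevant direction $e_t(a,b)$ is frozen by the past while $V_{t+1}$ is still a fresh $\mu$-sample — and the core idea, that a single step of the inner product rule already decreases $S$ by $\Omega(\sqrt{\ell})$ unless $V_{t+1}$ happens to fall in an $O(\ell^{-1/2})$-thin slab, is precisely the one driving Claim~\ref{step-1}.
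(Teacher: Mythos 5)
Your proof is correct, and it takes a genuinely (if modestly) different route from the paper's. The paper argues by contrapositive and union bound: it shows that if $\lvert\langle V_{t+1}, e_t(i,j)\rangle\rvert \geq k^2/\sqrt{S_t}$ simultaneously for \emph{all} pairs $(i,j)$, then the sum $\sum_{i\neq h}\nm{\delta_t(h,i)}\lvert\langle V_{t+1}, e_t(h,i)\rangle\rvert$ already exceeds $k$ (using $\sum_{i\neq h}\nm{\delta_t(h,i)}\geq\sqrt{S_t}/k$), so $S_{t+1}-S_t<-k$; hence on $J_t(\ell)$ some pair's slab must be hit, and it bounds $\P(J_t\Cond S_t\geq\ell/2)$ by a sum over all $\binom{k}{2}$ slab events. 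You instead pin down the \emph{specific} extremal pair $(a,b)$ in advance (an $\C{F}_t$-measurable choice), and observe that the $i=+$ term in the one-step sum dominates $\lvert\langle V_{t+1},\delta_t(a,b)\rangle\rvert$ because $\B{B}_h$ minimises and $\B{B}_+$ maximises the inner product with $V_{t+1}$; combined with $\nm{\delta_t(a,b)}\geq\sqrt{\ell}/k$ this forces $V_{t+1}$ into a single thin slab orthogonal to $e_t(a,b)$, with no union bound. Your version yields a marginally better constant (no extra $\binom{k}{2}$ factor), at the cost of the extra observation about $(h,+)$ dominating $(a,b)$; both approaches use the same inputs (estimate~\eqref{obsable}, H\"older continuity, Proposition~\ref{slice}) and give the same exponent $c=\alpha/2$.
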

\begin{proof}
If $V_{t+1}$ is assigned to some bin $\B{B}_h$, then, by~\eqref{obsable}, we have
\[
S_{t+1} - S_t \leq (k-1) - 2\sum_{i \in [k]: i\ne h}\nm{\delta_t({h,i})} \lvert \langle V_{t+1}, e_t(h,i)\rangle\rvert.
\]
From the triangle inequality,
\[\sum_{i \in [k]: i\ne h} \nm{\delta_t({h,i})} \geq \max_{1 \leq i,j \leq k} \,\nm{\delta_t({i,j})} \geq \frac{\sqrt{S_t}}{k} .\]
Hence, if it so happens that 
\[\lvert \langle V_{t+1}, e_t(i,j)\rangle\rvert \geq \frac{k^2}{\sqrt{S_t}}\] 
for all $1 \leq i,j \leq k$, then
\[S_{t+1} - S_t \leq (k-1) - 2\left(\frac{\sqrt{S_t}}{k}\right) \left(\frac{k^2}{\sqrt{S_t}}\right) < -k.\]
Therefore, it follows that
\[ \P(J_t(\ell) \Cond  \{S_{t} \ge \ell/2\}) \le \sum_{1 \le i,j \le k} \P\left(\lvert \langle V_{t+1}, e_t(i,j)\rangle\rvert \le \frac{k^2}{\sqrt{\ell/2}}\right).\]
As $\mu$ is a H\"older continuous probability distribution and $\B{V}(\mu)$ is an i.i.d.\ sequence, the claim now follows from Proposition~\ref{slice}.
\end{proof}

As before, since $S_{n+1} - S_n \le k-1$ for all $n \in \N$, it is clear that in any interval of $r$ steps in which our observable lies in the range $[\ell/2, \ell]$, there must exist at least $r/2$ steps in this interval where our observable decreases by at most $k-1$. Consequently, if $F_n(\ell)$ holds, then there must exist at least $r/2$ different values of $t \in \{n-r, n-r+1, \dots, n-1\}$ for which the event $J_t(\ell)$ holds. Using the Markov property, we deduce from Claim~\ref{step-2} that if $\mu$ is H\"older continuous, then we have
\[\P(F_n(\ell)) \le \binom{r}{r/2} \left(\frac{C}{\ell^c}\right)^{r/2} \]
for all $n \in \N$. It follows that for all $m \ge 0$, we have
\begin{equation}
\P(D(T) \ge m) \le T \binom{r}{r/2} \left(\frac{C}{\ell^c}\right)^{r/2},\label{DM-2}
\end{equation}
where $\ell = km^2/2$, $r = \ell / 2k$ and  $C,c > 0$ are constants depending on $d$, $k$ and $\mu$ alone; a simple calculation using~\eqref{DM-2} shows that $D(T) = O((\log T /\log \log T)^{1/2})$ with probability at least $1-T^{-2}$; the required bound~\eqref{nice_mu} in the case where $\mu$ is H\"older continuous now follows from the Borel--Cantelli lemma.
\end{proof}

Note that under the inner product rule, the vector $V_{n+1}$ is only ever assigned to a bin $\B{B}_i$ if $B_i^n$ lies on the convex hull of the set $\{B_1^n, B_2^n, \dots, B_k^n\}$. Much is known about the convex hulls of random subsets of $\R^d$ (see~\citep{hull}, for example), and it seems possible to us that the H\"older condition in Theorem~\ref{ipupbound} could be relaxed by carefully tracking the convex hull of the bins. However, we cannot altogether do away with some sort of `well-behavedness' condition: the inner product rule does not match the lower bound in Proposition~\ref{lowbound} in general, as evidenced by Proposition~\ref{badexist} which we prove below.

\begin{proof}[Proof of Proposition~\ref{badexist}]
Given $\omega\colon  \R_{>0} \to \R_{>0}$ that is both increasing and unbounded, we first fix a fast-growing sequence of `length-scales'. More precisely, we fix a sequence $\B{L} =(L_s)_{s \ge 1}$ of positive reals such that for all $s \in \N$, we have $L_s \ge 2$ and 
\[\omega\left(\exp\left( s^2 L_s^2\right) - 1\right) \ge 10s.\] 
Writing $T_s = \lfloor \exp( s^2 L_s^2) \rfloor$, this construction ensures that we have 
\[L_s \ge \frac{10(\log T_s)^{1/2}}{\omega(T_s)}\] 
for each $s \in \N$. Having constructed $\B{L}$, we define an atomic probability distribution $\mu_\B{L}$ on $\Bb^2$ with weight $(6/\pi^2) s^{-2}$ on the vector $(1/L_s, -1/2)$ for each $s \in \N$. 

We now define $\mu_\omega$ as follows. A vector drawn from $\mu_\omega$ is the vector $(0, 1/2)$ with probability $1/3$, uniformly distributed on $\Bb^2$ with probability $1/3$, and distributed according to $\mu_\B{L}$ with probability $1/3$.

We shall show, using an argument analogous to the one used to prove Proposition~\ref{lowbound}, that if we partition $\B{V}(\mu_\omega)$ into two bins $\B{B}_1$ and $\B{B}_2$ using the inner product rule, then
\begin{equation} \P\left( D(T_s) < \frac{ L_s} { 10} \right) \le s^{-2} \label{omegabound}
\end{equation}
for all sufficiently large $s \in \N$. It is then clear that almost surely,
\[\limsup_{T\to\infty}\left(\frac{D(T) \omega(T)}{(\log T)^{1/2}} \right) \ge 1.\]

We now prove that~\eqref{omegabound} holds for all sufficiently large $s \in \N$. To this end, fix $s \in \N$ and write $L = L_s$ and $T = T_s = \lfloor \exp( s^2 L^2) \rfloor$. Also, let $\delta_n = B_1^n - B_2^n$ for all $n \in \N$.

As before, we break the set $[T]$ into $r = T/L^2$ disjoint blocks $\Scr{T}_1, \Scr{T}_2, \dots, \Scr{T}_{r}$ each of length $L^2$; in other words, for $i \in[r]$, we have $\Scr{T}_i = \{t+1, t+2, \dots, t+L^2\}$, where $t = (i-1)L^2$. We say that a block $\Scr{T}$ is \emph{good} if $\nm {\delta_n} \ge L/10$ for some $n \in \Scr{T}$. For $i \in [r]$, writing $t = (i-1)L^2$, we denote by $F_i$ the event that $\nm{\delta_t} < L$. We deduce~\eqref{omegabound} from the following claim.
\begin{claim}\label{blockclaim}
For each $i \in[r]$, we have
\[
\P(\Scr{T}_i \text{ is good}\,\Cond F_i) \ge s^{-L^2}.
\]
\end{claim}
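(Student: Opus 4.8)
The plan is to prove the slightly stronger statement that $\P(\Scr{T}_i \text{ is good}\,\vert\,\delta_t = x)\ge s^{-L^2}$ for every $x\in\R^2$ with $\nm{x}<L$ (writing $L=L_s$); the claim then follows by averaging over $x$, because the evolution of $(\delta_n)$ produced by the construction below depends on the past only through $\delta_t$ (we shall avoid ties, apart from one case handled by hand). Write $x=(a_t,b_t)$. If $\nm{x}\ge L/10+1$ then $\nm{\delta_{t+1}}\ge L/10$ for \emph{any} value of $V_{t+1}$, since every vector has norm at most $1$. If, on the other hand, $L=L_s$ is bounded above by a suitable absolute constant $L_0$, then much as in the proof of Proposition~\ref{lowbound} — by repeatedly presenting a vector of norm about $1/2$ nearly perpendicular to the current $\delta_n$, which here has probability bounded below by an absolute constant since $\nm{\delta_n}$ stays below $L_0/10$ — the block is good with probability at least some $c(L_0)>0$, which exceeds $s^{-L^2}$ once $s$ is large. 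So from now on assume $L_0\le L<\infty$ and $\nm{x}<L/10+1$, so that $\lvert a_t\rvert,\lvert b_t\rvert<L/5$.

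\textbf{The forcing construction.} The crux is to describe a way of feeding in fewer than $L^2$ vectors, all lying in $\Bb^2$, so that if the $V_j$ follow it then $\nm{\delta_n}\ge L/10$ for some $n\in\Scr{T}_i$ no matter how ties are broken, and so that the probability of this happening is at least $s^{-L^2}$. The reason we cannot simply mimic Proposition~\ref{lowbound} is that near-perpendicular vectors have probability only of order $1/\nm{\delta_n}$ under $\mu_\omega$, which is hopelessly small once $L_s$ exceeds any power of $s$; so we must work essentially only with the two atoms $v=(0,1/2)$, of mass $1/3$, and $w=(1/L_s,-1/2)$, of mass $2/(\pi^2s^2)$. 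The construction has three stages. \textbf{(i)} Feed one vector from the ball of radius $\rho_s:=s^{-1/3}$ about $(0,1/3)$: almost surely on that event the second coordinate $b$ of $\delta$ then lies outside $\tfrac12\Z$ (this also sidesteps the case $\delta_t=0$, where a tie is unavoidable). \textbf{(ii)} Feed $v=(0,1/2)$ repeatedly: the first coordinate stays fixed while $b$, confined to one coset of $\tfrac12\Z$, is driven in at most $L/5+O(1)$ steps to the unique representative $b^*\in(0,1/2)$ of that coset. \textbf{(iii)} If $b^*\in(1/5,1/2)$, feed the pair $(w,v)$, in this order, over and over; a short case-check of the inner product rule shows that while $0\le a<L/10$ each such pair leaves $b$ equal to $b^*$ and increases $a$ by exactly $1/L_s$, with no ties. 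If instead $b^*\in(0,1/5]$, first feed one extra $v$ to move $b$ to $b^*-\tfrac12\in(-1/2,-1/5)$, and then feed $(w,v)$ repeatedly: while $-L/10<a\le 0$ each pair now leaves $b$ fixed and \emph{decreases} $a$ by $1/L_s$. Since $b^*\in(0,1/2)$, one of these two regimes applies, and after $O(L^2)$ pairs we reach $\lvert a\rvert\ge L/10$, hence $\nm{\delta_n}\ge L/10$. The total number of vectors specified is $O(L^2)$ with implied constant strictly less than $1$, hence below $L^2$ once $L_0$ is large.

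\textbf{The probability bound.} The recipe uses the ball once (probability $\ge \rho_s^2/3$), the atom $w$ at most $\alpha L^2$ times for some constant $\alpha<1/2$ (probability $2/(\pi^2s^2)$ each), and the atom $v$ at most $O(L^2)$ times (probability $1/3$ each); by independence of the $V_j$ it is realised with probability at least
\[
\frac{\rho_s^2}{3}\left(\frac13\right)^{O(L^2)}\left(\frac{2}{\pi^2 s^2}\right)^{\alpha L^2}.
\]
Taking logarithms and dividing by $L^2$, this is at least $s^{-L^2}$ as soon as $(1-\alpha)\log s$ exceeds a fixed constant plus a term that tends to $0$ as $s\to\infty$ (the latter collecting the $O(L)$-sized exponents and the $\rho_s$ factor, which is where we also use that $L=L_s$ is large in the remaining case $L\ge L_0$); thus it holds for all sufficiently large $s$. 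The decisive point is that the costly atom $w$, of mass only of order $s^{-2}$, is raised only to a power $\alpha L^2$ comfortably below $L^2$.

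\textbf{Main obstacle.} I expect stage (iii) to be the crux, and specifically the clash between "$w$ is the only atom that can grow the first coordinate all the way to $L_s/10$" and "$w$ shifts $b$ by $-\tfrac12$": since every atom of $\mu_\omega$ moves $b$ by $\pm\tfrac12$, the coordinate $b$ is pinned to a single coset of $\tfrac12\Z$ throughout the atomic part of the run, so one cannot freely place $b$ in the narrow window in which a single $(w,v)$ pair makes progress. Realising that \emph{every} residue $b^*\in(0,1/2)$ is covered by one of the two orientations of the pair, prepending the cheap generic move that forces $b\notin\tfrac12\Z$, and then carefully verifying the (tie-free) action of the inner product rule on these pairs against an adversarial tie-breaker, is where the real work lies; the probability accounting above is then routine.
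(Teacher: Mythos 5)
Your proof is correct, and it takes a genuinely different construction from the paper's. The paper's argument first drives $\delta_n$ into a small neighbourhood of the origin: event $E_1$ gets $\nm{\delta_P}\le 10$ within $10L$ steps using the uniform component, then $E_2$ steers $\delta$ into a fixed rectangle $\C{S}=\{(x,y):0\le x\le 1,\,-1/4\le y\le 0\}$ in $O(1)$ further steps, and only then runs the alternating $(0,1/2)$, $(1/L,-1/2)$ walk for $L^2/5$ steps, which translates $\delta$ by $(L/10,0)$. Your version dispenses with the trip to the origin entirely: you pay a single continuous sample (cost $\rho_s^2/3$, rather than the paper's $100^{-10L}\cdot 100^{-100}$) to knock $b=\delta_y$ off the lattice $\tfrac12\Z$, then collapse $b$ to a residue $b^*\in(0,1/2)$ using only the cheap atom $v$, and finally run the $(w,v)$ walk — and the nice structural observation that makes this work is that the two orientations of the pair (your Cases A and B) between them handle every $b^*\in(0,1/2)$, so that no further positioning of $\delta$ is needed. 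The trade-off is that your walk may use up to roughly $L^2/5$ copies of the expensive atom $w$ (versus the paper's exactly $L^2/10$), because the starting abscissa $a$ is uncontrolled and you may have to traverse the full interval $(-L/10,L/10)$ in the direction dictated by $b^*$; but as you note, all that matters for the final bound is that the exponent on $w$'s mass be $\alpha L^2$ with $\alpha<1/2$, which both constructions satisfy.

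Two small remarks. First, in the probability accounting the relevant coefficient is $(1-2\alpha)\log s$, not $(1-\alpha)\log s$ — the factor $2$ coming from the $s^{-2}$ in $w$'s mass; since you correctly identified $\alpha<1/2$ as the criterion, this is just a slip of the pen. Second, the wording ``while $0\le a<L/10$'' in Case A (and ``$-L/10<a\le 0$'' in Case B) is slightly off: the computation $\langle w,\delta\rangle=a/L-b^*/2<0$ in Case A only needs $a<Lb^*/2$, which for $b^*>1/5$ holds for \emph{all} $a<L/10$ including negative $a$, and that is exactly what you use when $a$ starts negative; similarly Case B works for all $a>-L/10$. The argument as executed is fine, but the stated constraints should be the one-sided ones.
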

\begin{proof}
We bound the probability of a block being good by showing that in the span of a block, there is a reasonably good chance of walking, using an alternating sequence of the vectors $(0,1/2)$ and $(1/L, -1/2)$, a distance of about $L/10$ to the right starting from somewhere close to the origin. We make this precise below.

Writing $t = (i-1)L^2$, first consider the event $E_1$ that there exists a time $P \in \{ t+1 , t+2, \dots, t+10L \}$ at which we have $\nm{\delta_{P}} \le 10$. We claim that \[\P( E_1 \Cond F_i) \ge (100)^{-10L};\] 
indeed, this follows from the fact that for all $n \in \N$, we crudely have
\[\P(\{\nm{\delta_{n+1}} \le \nm{\delta_{n}} - 1/10|\} \Cond \{ \nm{\delta_n} \ge 10 \}) \ge 1/100\]
because $V_{n+1}$ is sampled from the uniform distribution on $\Bb^2$ with probability $1/3$. 

Next, let
\[ \C{S} = \{(x,y): 0 \le x \le 1 \text{ and} -1/4 \le y \le 0 \} \subset \R^2\]
and consider the event $E_2$ that there exists a time $Q \in \{P, P+1, \dots, P+10^5\}$ at which we have $\delta_{Q} \in \C{S}$. It is not hard to check that again, we crudely have
\[ \P( E_2 \Cond E_1 \cap F_i ) \ge 100^{-100}.\]
To see this, note that if $P$ exists, then it is possible to walk, while `respecting the inner product rule' throughout, from $\delta_P$ to the set $\C{S}$ using vectors of norm $1/2$ in at most $100$ steps; the claimed bound then follows by `enlarging' such a walk and using the uniform component of $\mu_\omega$.

Finally, consider the event $E_3$ that the vectors $V_{Q+1}, V_{Q+2}, \dots, V_{Q + L^2/5}$ are alternately the vectors $(0,1/2)$ and $(1/L, -1/2)$. It is easy to see from the definition of $\mu_\omega$ that
\[ \P(E_3 \Cond E_2 \cap E_1 \cap F_i) = \left(\frac{2}{3\pi^2s^2}\right)^{\frac{L^2}{10}}.\]

Since $\delta_Q \in \C{S}$ under $E_2 \cap E_1 \cap F_i$, if $E_3$ also holds, then a simple calculation shows that we have $\delta_{t+1} = \delta_t + V_{t+1}$ for each $t \in \{Q, Q+1, \dots, Q + L^2/5 - 1\}$ under the inner product rule; consequently, under $E_3 \cap E_2 \cap E_1 \cap F_i$, we have $\delta_{Q + L^2/5} = \delta_Q + (L/10,0)$. It is now clear, provided $s\in \N$ is sufficiently large, that we have
\[ \P(\Scr{T}_i \text{ is good}\,\Cond F_i) \ge \P(E_3 \Cond E_2 \cap E_1 \cap F_i) \P(E_2 \Cond E_1 \cap F_i) \P( E_1 \Cond F_i) \ge s^{-L^2}
\]
 with room to spare.
\end{proof}

Using the Markov property, we now deduce from Claim~\ref{blockclaim} that
\[
\P\left(D(T) < L/10 \right) \le \prod_{i = 1}^{r} (1-\P(\Scr{T}_i \text{ is good} \Cond F_i)) \le \exp\left(-\frac{T s^{-L^2}}{L^2} \right) \le s^{-2},
\]
where the last inequality holds provided $s$ is sufficiently large as $T = \lfloor \exp( s^2 L^2) \rfloor$. It is now clear that~\eqref{omegabound} holds for all sufficiently large $s\in \N$; the proposition follows.
\end{proof}

\section{The best-of-two rule}\label{s:2b}
In this section, we prove Theorem~\ref{twobinupbound}. Before turning to the proof, let us recall the following classical concentration inequality due to Azuma and Hoeffding.
\begin{proposition}
Let $(X_t)_{t \ge 0}$ be a supermartingale such that $| X_t - X_{t-1} | \le C$ for all $t \ge 1$. For all positive integers $N$ and all $m\ge0$, we have
\[ \P(X_N - X_0 \ge m) \le \exp \left (\frac{-m^2}{2NC^2} \right). \eqno\qed
\]
\end{proposition}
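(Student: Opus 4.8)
The plan is to prove this by the standard exponential-moment (Chernoff) method applied to the increments of the supermartingale. Let $\C{F}_t = \sigma(X_0, X_1, \dots, X_t)$ denote the natural filtration and write $Y_t = X_t - X_{t-1}$ for the increments, so that the supermartingale hypothesis reads $\E[Y_t \Cond \C{F}_{t-1}] \le 0$ and the bounded-differences hypothesis reads $|Y_t| \le C$ for each $t \ge 1$. The case $m = 0$ is trivial since the claimed bound is then $1$, so we may assume $m > 0$ (and $C > 0$). For any $\lambda > 0$, Markov's inequality applied to $e^{\lambda(X_N - X_0)}$ gives
\[
\P(X_N - X_0 \ge m) \le e^{-\lambda m}\, \E\bigl[e^{\lambda(X_N - X_0)}\bigr] = e^{-\lambda m}\, \E\Bigl[\textstyle\prod_{t=1}^N e^{\lambda Y_t}\Bigr],
\]
and all these expectations are finite because $e^{\lambda Y_t} \le e^{\lambda C}$; so it suffices to control the moment generating function of $X_N - X_0$ and then optimise over $\lambda$.

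First I would establish the one-step estimate (Hoeffding's lemma): whenever a random variable $Y$ satisfies $|Y| \le C$ and $\E[Y \Cond \C{F}_{t-1}] \le 0$, we have $\E[e^{\lambda Y} \Cond \C{F}_{t-1}] \le e^{\lambda^2 C^2/2}$. This is the only real content of the argument. By convexity of $x \mapsto e^{\lambda x}$, every $y \in [-C,C]$ satisfies the linear upper bound $e^{\lambda y} \le \tfrac{C-y}{2C}\,e^{-\lambda C} + \tfrac{C+y}{2C}\,e^{\lambda C}$; taking conditional expectation, writing $\mu = \E[Y \Cond \C{F}_{t-1}]$, and using $\mu \le 0$ together with $e^{\lambda C} - e^{-\lambda C} \ge 0$ yields
\[
\E[e^{\lambda Y} \Cond \C{F}_{t-1}] \le \cosh(\lambda C) + \frac{\mu}{2C}\bigl(e^{\lambda C} - e^{-\lambda C}\bigr) \le \cosh(\lambda C) \le e^{\lambda^2 C^2/2},
\]
where the last inequality is the elementary bound $\cosh x \le e^{x^2/2}$, which follows by comparing Taylor coefficients (using $(2n)! \ge 2^n n!$).

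Next I would run the usual iterative conditioning. Since $\prod_{t=1}^{N-1} e^{\lambda Y_t}$ is $\C{F}_{N-1}$-measurable and nonnegative, the tower property gives
\[
\E\Bigl[\textstyle\prod_{t=1}^N e^{\lambda Y_t}\Bigr] = \E\Bigl[\Bigl(\textstyle\prod_{t=1}^{N-1} e^{\lambda Y_t}\Bigr)\E\bigl[e^{\lambda Y_N} \Cond \C{F}_{N-1}\bigr]\Bigr] \le e^{\lambda^2 C^2/2}\, \E\Bigl[\textstyle\prod_{t=1}^{N-1} e^{\lambda Y_t}\Bigr],
\]
and iterating this $N$ times (the empty product having expectation $1$) gives $\E[e^{\lambda(X_N - X_0)}] \le e^{N\lambda^2 C^2/2}$. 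Combining with the Markov step,
\[
\P(X_N - X_0 \ge m) \le \exp\Bigl(-\lambda m + \tfrac{1}{2} N \lambda^2 C^2\Bigr),
\]
and the choice $\lambda = m/(NC^2)$, which minimises the exponent, yields exactly $\exp\bigl(-m^2/(2NC^2)\bigr)$, as claimed. The main (and only) obstacle is the one-step moment generating function bound of the previous paragraph; the remaining steps are routine bookkeeping with conditional expectations.
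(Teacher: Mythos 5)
Your proof is correct and is the standard exponential-moment argument for the Azuma--Hoeffding inequality: Markov's inequality applied to $e^{\lambda(X_N-X_0)}$, the one-step Hoeffding lemma via convexity and $\cosh x \le e^{x^2/2}$, iterated conditioning, and optimisation over $\lambda$. The paper itself states this proposition as a classical result (with a terminal $\qed$ and no argument given), so there is no proof in the paper to compare against; your write-up supplies the standard proof that the authors are implicitly invoking.
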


Armed with the Azuma--Hoeffding inequality, we are ready to prove Theorem~\ref{twobinupbound}.

\begin{proof}[Proof of Theorem~\ref{twobinupbound}]
To prove the result, we shall first show that the distance between any pair of bins is `self-correcting' under the best-of-two rule, and then use martingale techniques to track these distances.

We proceed by induction over the dimension. Consider the function $f_\mu\colon \Ss^{d-1} \to [0,1]$ defined by 
\[f_\mu(e) = \int_{\Bb^d} \left| \langle x,e \rangle \right| \,d \mu
\]
and define
\[ C_\mu = \inf_{e \in \Ss^{d-1}} f(e).\]
We claim that it suffices to prove the result in the case where $C_\mu > 0$. Indeed, if $C_\mu = 0$, then since $f$ is continuous and $\Ss^{d-1}$ is compact, we have $f_\mu(e) = 0$ for some $e \in \Ss^{d-1}$. In other words, if $C_\mu = 0$, then there exists a hyperplane $\C{H} \subset \R^d$ passing through the origin such that $\mu(\C{H} \cap \Bb^d) = 1$. When $d = 1$, this is equivalent to saying that $\mu(\{0\}) = 1$, in which case the result is trivial. When $d>1$, by identifying $\C{H} \cap \Bb^d$ with $\Bb^{d-1}$, it is clear that $\mu$ may be identified with a probability distribution supported on the $(d-1)$-dimensional unit ball, in which case we may proceed inductively.

Now, assume that $C_\mu > 0$. We shall show that with probability at least $1-T^{-2}$, we have $\nm{B_1^n - B_2^n} = O(\log T)$ for all $n \in [T]$; the result then follows from a union bound over all pairs of bins and the Borel--Cantelli lemma.

Let $\delta_n = B_1^n - B_2^n$ and let $A_n = \nm{\delta_n}^2$. Our first task will be to estimate the conditional expectation $\E[A_{n+1} - A_n \Cond A_n]$; we do this as follows. 

Recall  that given $V_{n+1}$, we choose two bins $\B{B}_i$ and $\B{B}_j$ uniformly at random from the set of all bins (without replacement) and assign $V_{n+1}$ to $\B{B}_i$, say, if $\langle V_{n+1}, B^n_i \rangle \le \langle V_{n+1}, B^n_j \rangle$. Let $E^+$ denote the event that the two bins chosen at time $n+1$ are precisely $\B{B}_1$ and $\B{B}_2$ and let $E^-$ denote the event that neither of these two bins is chosen at time $n+1$. Also, for $i \in \{3, \dots, k\}$, let $E_i$ denote the event that two bins chosen at time $n+1$ are $\B{B}_i$ and one of $\B{B}_1$ or $\B{B}_2$.

First, writing $e_n$ for the unit vector in the direction of $\delta_n$, we have
\[
\E[A_{n+1} - A_n \Cond \delta_n, E^+]  = \E\left[\nm{V_{n+1}}^2\right] - 2f_\mu(e_n)\sqrt{A_n}
\]
from which it follows that
\begin{equation}
\E[A_{n+1} - A_n \Cond A_n, E^+]\le 1-2C_\mu\sqrt{A_n}.\label{condchange}
\end{equation}
Next, as the bins $\B{B}_1$ and $\B{B}_2$ are untouched at time $n+1$ under $E^-$, we also have $\E[A_{n+1} - A_n \Cond A_n, E^-] = 0$. Finally, we observe the following.
\begin{claim}
For each $3 \le i \le k$, we have
\[\E[A_{n+1} - A_n \Cond A_n, E_i] \le 1.\]
\end{claim}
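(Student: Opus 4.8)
The plan is to establish the bound in the stronger form in which we condition on the entire history $\C{F}_n$ (which determines $B_1^n,\dots,B_k^n$, and hence $\delta_n$ and $A_n$) together with the event $E_i$, and then average over the histories consistent with a given value of $A_n$ to recover the stated inequality via the tower property. So fix $3\le i\le k$ and work conditionally on $\C{F}_n$ and $E_i$. Since the pair of bins examined at step $n+1$ is chosen uniformly at random and independently of everything else, conditionally on $E_i$ this pair is $\{\B{B}_1,\B{B}_i\}$ with probability $1/2$ and $\{\B{B}_2,\B{B}_i\}$ with probability $1/2$ --- these being the only two pairs consistent with $E_i$ --- and this choice remains independent of $V_{n+1}$.

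First I would record how $A_n$ changes in each of the two sub-cases. If the examined pair is $\{\B{B}_1,\B{B}_i\}$, then $V_{n+1}$ is assigned either to $\B{B}_1$, in which case $\delta_{n+1}=\delta_n+V_{n+1}$ and $A_{n+1}-A_n=\nm{V_{n+1}}^2+2\langle V_{n+1},\delta_n\rangle$, or to $\B{B}_i$, in which case $\delta_{n+1}=\delta_n$ and $A_{n+1}-A_n=0$; symmetrically, if the pair is $\{\B{B}_2,\B{B}_i\}$ then $A_{n+1}-A_n=\nm{V_{n+1}}^2-2\langle V_{n+1},\delta_n\rangle$ if $V_{n+1}$ goes to $\B{B}_2$ and $A_{n+1}-A_n=0$ otherwise. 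Let $\one_1$ (resp.\ $\one_2$) denote the indicator of the event that $V_{n+1}$ is assigned to $\B{B}_1$ (resp.\ $\B{B}_2$) in the corresponding sub-case; both are determined by $V_{n+1}$ and $\C{F}_n$ (together with whatever tie-breaking rule is in force). Using $\langle V_{n+1},\delta_n\rangle=\langle V_{n+1},B_1^n\rangle-\langle V_{n+1},B_2^n\rangle$ and averaging over the coin flip, we obtain
\[
\E[A_{n+1}-A_n\Cond\C{F}_n,E_i]=\tfrac12\,\E\big[(\one_1+\one_2)\nm{V_{n+1}}^2\Cond\C{F}_n\big]+\E\big[(\one_1-\one_2)\big(\langle V_{n+1},B_1^n\rangle-\langle V_{n+1},B_2^n\rangle\big)\Cond\C{F}_n\big].
\]

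Since $\one_1,\one_2\in\{0,1\}$ and $\nm{V_{n+1}}\le1$ almost surely, the first term on the right is at most $\E[\nm{V_{n+1}}^2]\le1$. The heart of the matter is that the second term is nonpositive, and here the best-of-two rule does the work: writing $a=\langle V_{n+1},B_1^n\rangle$, $b=\langle V_{n+1},B_2^n\rangle$ and $c=\langle V_{n+1},B_i^n\rangle$, the rule forces $\one_1=1$ when $a<c$ and $\one_1=0$ when $a>c$ (with either value allowed when $a=c$, depending on how ties are broken), and likewise $\one_2$ is governed by the comparison of $b$ with $c$. A short case check on the ordering of $a,b,c$ then gives $(\one_1-\one_2)(a-b)\le0$ pointwise: if $\one_1-\one_2=1$ then necessarily $a\le c\le b$ so $a-b\le0$, if $\one_1-\one_2=-1$ then $b\le c\le a$ so $a-b\ge0$, and if $\one_1=\one_2$ the product vanishes. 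Hence the second term is $\le0$, so $\E[A_{n+1}-A_n\Cond\C{F}_n,E_i]\le1$, and averaging over histories yields the claim. I do not anticipate any real obstacle: the only slightly delicate point is keeping track of the tie-breaking convention in the definitions of $\one_1$ and $\one_2$, but since the pointwise inequality above holds for every admissible choice of these indicators, this is harmless.
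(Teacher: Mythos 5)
Your proof is correct and is essentially the paper's argument in a more compressed form: the paper decomposes $E_i$ into four sub-events $E_i(\pm,\pm)$ according to the signs of $\langle V,U_i-U_1\rangle$ and $\langle V,U_i-U_2\rangle$ and bounds the contribution of each separately (zero for $(-,-)$, nonpositive linear term for the mixed cases, cancellation by symmetry for $(+,+)$), and these four sub-events correspond exactly to the four possible values of $(\one_1,\one_2)$, so your pointwise inequality $(\one_1-\one_2)(a-b)\le0$ packages the paper's case analysis into a single line. Same idea, slightly tidier bookkeeping.
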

\begin{proof}
To simplify notation, let $V = V_{n+1}$, $U_1 = B^n_1$, $U_2 = B^n_2$ and $U_i = B^n_i$. To prove the claim, we decompose $E_i$ into the events
\begin{enumerate}
\item $E_i(-,-) = E_i \cap \{\langle V, U_i - U_1\rangle \le 0\} \cap \{\langle V, U_i - U_2\rangle \le 0\}$,
\item $E_i(-,+) = E_i \cap \{\langle V, U_i - U_1\rangle \le 0\} \cap \{\langle V, U_i - U_2\rangle > 0\}$,
\item $E_i(+,-) = E_i \cap \{\langle V, U_i - U_1\rangle > 0\} \cap \{\langle V, U_i - U_2\rangle \le 0\}$, and
\item $E_i(+,+) = E_i \cap \{\langle V, U_i - U_1\rangle > 0\} \cap \{\langle V, U_i - U_2\rangle > 0\}$.
\end{enumerate}

First, as the vector $V$ is deterministically assigned to the bin $\B{B}_i$ under $E_i(-,-)$, \[\E[A_{n+1} - A_n \Cond A_n, E_i(-,-)] = 0.\] Next, we claim that
\[\E[A_{n+1} - A_n \Cond A_n, E_i(-,+)] \le 1\]
and that
\[\E[A_{n+1} - A_n \Cond A_n, E_i(+,-)] \le 1.\]
Indeed, under $E_i(-,+)$ for example, it is clear that the best-of-two rule always assigns $V$ to either $\B{B}_i$ or $\B{B}_2$ (but never to $\B{B}_1)$; since we also have $\langle V, U_1 - U_2\rangle > 0$ under $E_i(-,+)$, the claim follows. Finally, under $E_i(+,+)$, the best-of-two rule never assigns $V$ to $\B{B}_i$, and $V$ is equally like to be assigned to either $\B{B}_1$ or $\B{B}_2$ because each of these bins is equally likely to be the other bin selected in addition to $\B{B}_i$. Therefore,
\begin{multline*}
\E[A_{n+1} - A_n \Cond \delta_n, E_i(+,+)] = \E \left[\nm{V}^2\right] + (1/2)\E\left[ 2\sqrt{A_n} \langle V, e_n \rangle \Cond \delta_n, E_i(+,+)\right]\\ + (1/2)\E\left[ 2\sqrt{A_n} \langle -V, e_n \rangle \Cond \delta_n, E_i(+,+)\right],
\end{multline*}
and consequently,
\[ 
\E[A_{n+1} - A_n \Cond A_n, E_i(+,+)] = \E \left[\nm{V}^2\right] \le 1.
\]

Putting these facts together, it follows that
\[\E[A_{n+1} - A_n \Cond A_n, E_i] \le 1, \]
proving the claim.
\end{proof}

It is now clear that $\E[A_{n+1} - A_n \Cond A_n, (E^+)^c] \le 1$. As $\P(E^+) \ge 1/k^2$, we deduce from~\eqref{condchange} that
\begin{equation}
\E[A_{n+1} - A_n \Cond A_n] \le 1-C\sqrt{A_n},\label{selfcor}
\end{equation}
where $C = 2C_\mu / k^2 \le 1$ is a positive constant depending on $d$, $k$ and $\mu$ alone.

With the benefit of hindsight, let $m = (100 \log T / C)^2$ and denote by $F$ the event that $A_n > 2m$ for some $n \in [T]$. To bound $\P(F)$ from above, we define a collection of stopping times as follows. Let $\C{L}_0 = 0$ and for each $j \in \N$, let
\begin{enumerate}
\item $\C{U}_j = \inf\{n : n \ge \C{L}_{j-1} \text{ and } A_n \ge m\}$, and
\item $\C{L}_j = \inf\{n : n \ge \C{U}_j \text{ and }  A_n < m\}$.
\end{enumerate}

If $F$ holds, then it is clear that there exists a $j \in \N$ such that $A_n > 2m$ for some $n \in [\C{U}_j, \C{L}_j \wedge T]$. Let $F_j$ denote the event that there exists an $n \in [\C{U}_j, \C{L}_j]$ such that $A_n > 2m$ and note, by the union bound, that $\P(F) \le \sum_{j = 1}^T \P(F_j)$. Therefore, to complete the proof, it suffices to show that $\P(F_j) = o(T^{-3})$ for each $1 \le j \le T$.

For concreteness, we show that $\P(F_1) = o(T^{-3})$; the same argument may be used to bound $\P(F_j)$ for any $j \le T$. In what follows, all inequalities will hold provided $T$ (and hence $m$) is sufficiently large. Writing $\C{U} = \C{U}_1$ and $\C{L} = \C{L}_1$, we define another stopping time 
\[\C{N} = \C{L} \wedge 
\inf\{n : n \ge \C{U} \text{ and }  A_n > 2m\}.\] Clearly, $\P(F_1) = \P(\C{N} < \C{L})$. Now, set $\ell = C\sqrt{m}/{2}$ and consider, for $t \ge 0$, the process 
\[X_t = A_{t+\C{U}}+t\ell.\] 
First, note that for each $t \in [0, \C{N} -\C{U})$,
\[ X_{t+1} - X_t = \ell + A_{t+1+\C{U}} - A_{t+\C{U}}, \]
so we consequently have
\[ \lvert X_{t+1} - X_t \rvert \le \ell + 1 +2\sqrt{2m},\]
where the inequality above is immediate from the definition of $\C{N}$. Next, we also have
\[\E[X_{t+1} - X_t \Cond X_t] = \ell + \E[A_{t + 1 + \C{U}} - A_{t + \C{U}} \Cond A_{t + \C{U}}] \le  \ell + 1 - C\sqrt{m}\]
for each $t \in [0, \C{N}-\C{U})$, where the last inequality follows from~\eqref{selfcor} and the definitions of $\C{U}$ and $\C{L}$. It is now clear that $(X_t)_{t\ge 0}$ with $t \in [0, \C{N}-\C{U}]$ is a supermartingale with increments bounded by $4\sqrt{m}$. Therefore, for any $N \in [0, \C{L} - \C{U}]$, by the Azuma--Hoeffding inequality,  we have
\begin{align*}
\P(\C{N} = N + \C{U}) &\le \P(X_N- X_0 \ge 2m + N\ell - X_0)\\
&\le \P(X_N- X_0 \ge m/2 + N\ell)\\
&\le \exp\left(\frac{-(m/2 + N\ell)^2}{16Nm} \right) = o\left(T^{-4}\right),
\end{align*}
where the last inequality holds uniformly in $N$. By applying the union bound over the (at most $T$) possible values of $N$, we obtain that $\P(F_1) = o(T^{-3})$. This completes the proof of Theorem~\ref{twobinupbound}.
\end{proof}

\section{Conclusion}\label{s:conc}
First, it would be nice to know under what conditions~\eqref{nice_mu} holds in general. We have proved this estimate for probability distributions satisfying a H\"older condition. At the other end of the spectrum, the same bound also holds for probability distributions supported on a finite number of atoms; in fact, it can be shown in this case that under the inner product rule, we \emph{deterministically} have $D(T) = O(1)$. We know from Proposition~\ref{badexist} that the inner product rule does not match the lower bound in Proposition~\ref{lowbound} in general, however.

Next, it is worth mentioning that the construction in Proposition~\ref{badexist} was designed specifically to be `bad' for the inner product rule; in particular, this construction does not improve on the strategy-agnostic lower bound in Proposition~\ref{lowbound}. It is therefore an intriguing problem to decide the following: given a probability distribution on the unit ball, does there exist a (distribution-specific) partitioning strategy that matches the lower bound in Proposition~\ref{lowbound} to within a constant factor? Of course, one can also ask the following (perhaps more difficult) question: is there a universal strategy that matches the lower bound in Proposition~\ref{lowbound} for every probability distribution on the unit ball?

Finally, it would also be good to improve the implicit constants in our results and quantitatively understand the influence of the number of bins on the problems at hand; indeed, it is natural to expect that the freedom to use more bins should offer better control. A careful analysis of our proofs shows that for the uniform distribution, the lower bound in Proposition~\ref{lowbound} and the upper bound in Theorem~\ref{ipupbound} differ by a multiplicative factor of $k$, roughly; bridging this gap remains an interesting problem.

\section*{Acknowledgements}
We would like to thank Solom Heddaya for helpful discussions about the nature of resource requests in large scale computation and Ganesh Ananthanaryanan for empirical studies of various partitioning strategies on real-world data. Our thanks also to Rob Morris for interesting discussions on self-correction.

\bibliographystyle{amsplain}
\bibliography{balance_rv}


\begin{dajauthors}
\begin{authorinfo}[ja]

  Juhan Aru\\
  Departement Mathematik, ETH Z\"urich, R\"amistrasse 101, 8092, Z\"urich, Switzerland\\
  juhan.aru\imageat{}math\imagedot{}ethz\imagedot{}ch
\end{authorinfo}
\begin{authorinfo}[bn]
 Bhargav Narayanan\\
 Department of Pure Mathematics and Mathematical Statistics, University of Cambridge, Wilberforce Road, Cambridge CB3\thinspace0WB, UK\\
 b.p.narayanan\imageat{}dpmms\imagedot{}cam\imagedot{}ac\imagedot{}uk
 \end{authorinfo}
\begin{authorinfo}[as]
Alex Scott\\
Mathematical Institute, University of Oxford, Andrew Wiles Building, Radcliffe Observatory Quarter, Woodstock Road, Oxford OX2\thinspace6GG, UK\\
scott\imageat{}maths\imagedot{}ox\imagedot{}ac\imagedot{}uk
\end{authorinfo}
\begin{authorinfo}[rv]
Ramarathnam Venkatesan\\
Mircosoft Research, Redmond WA 98052, USA\\
venkie\imageat{}microsoft.com
\end{authorinfo}
\end{dajauthors}

\end{document}